\documentclass[letterpaper,10pt,reqno]{amsart}
\usepackage{indentfirst} 
\usepackage{amssymb}
\usepackage{mathtools} 
\usepackage{mathabx} 
\usepackage{amsthm}
\usepackage{thmtools}
\usepackage{enumitem} 
\usepackage[colorlinks=true]{hyperref} 
\usepackage[usenames,dvipsnames]{xcolor} 
\usepackage{ifthen}
\usepackage{tikz}
\usetikzlibrary{decorations.pathreplacing}
\usepackage{tikz-cd}
  

\makeatletter

\def\MRbibitem{\@ifnextchar[\my@lbibitem\my@bibitem}

\def\mybiblabel#1#2{\@biblabel{{\hyperref{http://www.ams.org/mathscinet-getitem?mr=#1}{}{}{#2}}}}

\def\myhyperanchor#1{\Hy@raisedlink{\hyper@anchorstart{cite.#1}\hyper@anchorend}}

\def\my@lbibitem[#1]#2#3#4\par{%
  \item[\mybiblabel{#2}{#1}\myhyperanchor{#3}\hfill]#4%
  \@ifundefined{ifbackrefparscan}{}{\BR@backref{#3}}%
  \if@filesw{\let\protect\noexpand\immediate
    \write\@auxout{\string\bibcite{#3}{#1}}}\fi\ignorespaces%
}

\def\my@bibitem#1#2#3\par{%
  \refstepcounter\@listctr
  \item[\mybiblabel{#1}{\the\value\@listctr}\myhyperanchor{#2}\hfill]#3%
  \@ifundefined{ifbackrefparscan}{}{\BR@backref{#2}}%
  \if@filesw\immediate\write\@auxout
    {\string\bibcite{#2}{\the\value\@listctr}}\fi\ignorespaces%
}

\makeatother

\DeclareFontFamily{U} {MnSymbolA}{}
\DeclareFontShape{U}{MnSymbolA}{m}{n}{
   <-6> MnSymbolA5
   <6-7> MnSymbolA6
   <7-8> MnSymbolA7
   <8-9> MnSymbolA8
   <9-10> MnSymbolA9
   <10-12> MnSymbolA10
   <12-> MnSymbolA12}{}
\DeclareFontShape{U}{MnSymbolA}{b}{n}{
   <-6> MnSymbolA-Bold5
   <6-7> MnSymbolA-Bold6
   <7-8> MnSymbolA-Bold7
   <8-9> MnSymbolA-Bold8
   <9-10> MnSymbolA-Bold9
   <10-12> MnSymbolA-Bold10
   <12-> MnSymbolA-Bold12}{}
\DeclareSymbolFont{MnSyA} {U} {MnSymbolA}{m}{n}
 \DeclareFontFamily{U} {MnSymbolC}{}
\DeclareFontShape{U}{MnSymbolC}{m}{n}{
  <-6> MnSymbolC5
  <6-7> MnSymbolC6
  <7-8> MnSymbolC7
  <8-9> MnSymbolC8
  <9-10> MnSymbolC9
  <10-12> MnSymbolC10
  <12-> MnSymbolC12}{}
\DeclareFontShape{U}{MnSymbolC}{b}{n}{
  <-6> MnSymbolC-Bold5
  <6-7> MnSymbolC-Bold6
  <7-8> MnSymbolC-Bold7
  <8-9> MnSymbolC-Bold8
  <9-10> MnSymbolC-Bold9
  <10-12> MnSymbolC-Bold10
  <12-> MnSymbolC-Bold12}{}
\DeclareSymbolFont{MnSyC} {U} {MnSymbolC}{m}{n}

\DeclareMathSymbol{\top}{\mathord}{MnSyA}{219} 
\DeclareMathSymbol{\plus}{\mathord}{MnSyC}{20} 


\declaretheorem[numberwithin=section]{theorem}
\declaretheorem[sibling=theorem]{lemma}
\declaretheorem[sibling=theorem]{corollary}
\declaretheorem[sibling=theorem]{proposition}
\declaretheorem[sibling=theorem,style=definition]{definition}

\declaretheorem[sibling=theorem,style=remark]{remark}

\setcounter{tocdepth}{1}    
\setcounter{secnumdepth}{3}
\hypersetup{bookmarksdepth = 3} 
\numberwithin{equation}{section}     

\setlist[enumerate,1]{label={\upshape(\alph*)},ref=\alph*}
\setlist[enumerate,2]{label={\upshape(\arabic*)},ref=\arabic*}

\newcommand{\R}{\mathbb{R}}
\newcommand{\Z}{\mathbb{Z}}
\newcommand{\N}{\mathbb{N}}

\def\phi{\varphi}
\def\R{{\mathbb R}}

\def\N{{\mathbb N}}
\def\Z{{\mathbb Z}}

\def\Q{{\mathbb Q}}

\newcommand{\vertiii}[1]{{\left\vert\kern-0.25ex\left\vert\kern-0.25ex\left\vert #1 
    \right\vert\kern-0.25ex\right\vert\kern-0.25ex\right\vert}}
\newcommand{\invertiii}[1]{{\vert\kern-0.25ex\vert\kern-0.25ex\vert #1 
    \vert\kern-0.25ex\vert\kern-0.25ex\vert}}

\begin{document}

\title{Odometers, Backward continued fractions and counting rationals}

\subjclass{37E05, 37B10, 37A35}

\keywords{Odometer, interval exchange transformations, backward continued fraction}

\begin{thanks}  {We would like to thank An\'ibal Velozo for interesting discussions on the topic of this article and to Giovanni Panti for informing us of \cite{bi}. We would also like to thank the referees and the editor for their comments and suggestions. Iommi was partially supported by Proyecto Fondecyt 1230100. Ponce was partially supported by Proyecto Fondecyt 1220032.}
 \end{thanks}

\author[G.~Iommi]{Godofredo Iommi} \address{Facultad de Matem\'aticas,
Pontificia Universidad Cat\'olica de Chile (UC), Avenida Vicu\~na Mackenna 4860, Santiago, Chile}
 \email{\href{mailto:godofredo.iommi@gmail.com}{godofredo.iommi@gmail.com}} 
\urladdr{\href{http://www.mat.uc.cl/~giommi}{www.mat.uc.cl/$\sim$giommi}}

 \author[M.~Ponce]{Mario Ponce}   \address{Facultad de Matem\'aticas,
Pontificia Universidad Cat\'olica de Chile (UC), Avenida Vicu\~na Mackenna 4860, Santiago, Chile}
\email{\href{mponcea@mat.uc.cl}{mponcea@mat.uc.cl}}

\maketitle

\begin{abstract}
It has been more than twenty years since Moshe Newman, based on work by Neil Calkin and Herbert Wilf, introduced  an \emph{explicit} bijection between the rational and natural numbers. Interestingly, this bijection is dynamic in nature. Indeed, Newman's map  has the property that the orbit of zero provides the required bijection. Claudio Bonanno and Stefano Isola, using continued fractions expansions, described the dynamics of  its first return time map $T$. They proved that it is topologically conjugated to the dyadic odometer. In this article, we prove that the \emph{correct} numerical system needed to analyze this map is the backward continued fractions. Indeed, this approach has the advantage that it provides  \emph{explicitly} the action of $T$ on the expansion. As a by-product, we naturally obtain an \emph{explicit} formula for Minkowski's  question mark function in terms of backward continued fractions.  The whole point of Newman was to provide an explicit bijection, our approach shares the same taste for the explicit.
\end{abstract}

%

\section{Introduction.}

We will essentially tell a \textsc{Monthly}  tale. Interestingly, in Spanish the word for  {\it counting} and that for {\it telling} is actually the same, \emph{contar}. In 1873, Cantor established that the rational numbers are countable. This is a remarkable result not only because the natural numbers are strictly contained in the rationals, but because the later are dense in the (uncountable) set of real numbers. Most proofs of this fact do not provide an explicit bijection between $\N$ and $\Q$. In this article we will be interested in an \emph{explicit} bijection that was defined and studied at length in this journal. \\

The year 2000, in their \textsc{Monthly}  article \cite{cw}, Calkin and Wilf constructed a sequence of positive integers, $b(n)$, with the property that every positive rational appears only once as $b(n)/b(n+1)$. The numbers $b(n)/b(n+1)$ were easily arranged in a dyadic tree. Moreover,  simple rules to obtain the two descendants were given. This tree is now, sometimes, {{}{referred}} to as the Calkin and Wilf tree. In 2003,  answering a question of Donald Knuth (\textsc{Monthly}  Problem 10906), and inspired by the work of Calkin and Wilf,  Newman \cite{k} constructed a dynamical system with the property that the orbit of zero induces such a bijection. This means that he defined a function $F:[0,\infty) \to [0, \infty)$ with the property that the map $n \mapsto F^n(0)$, where $F^n(x)= (F \circ \dots \circ F)(x)$ denotes the $n-$th iterate of $F$, is the required bijection.\\

We will study the first return map of the map $F$ to the unit interval, which we denote by $T$. It turns out that $T$ is nothing but the second iterate of $F$. Moreover, the orbit of zero with respect to $T$ induces a bijection between the natural numbers and $\Q \cap [0,1)$. Bonnano and Isola \cite{bi}, {{}{in}} the year 2009, studied this map in detail. They proved that, from a dynamical systems point of view, the map $T$ is equivalent (topologically conjugated) to a well known system called \emph{dyadic odometer}. This later system was apparently introduced by John von Neumann and also studied by Shizuo 
 Kakutani. All of its orbits are dense and it has a unique invariant probability measure. It acts in a similar way as car devices that count the kilometers driven, hence its name. The proof of Bonnano and Isola uses the continued fraction expansion of real numbers. \\

In this note we propose a proof of the result by Bonnano and Isola that instead of using continued fractions makes use of \emph{backward continued fractions}, see Theorem \ref{main}.  We believe that this numeration system, studied in detail at least since the 1957 work of Alfred Renyi \cite{re},  is better suited to study the map $T$. Indeed, the motivation to  study the map $T$ is that it provides an \emph{explicit} bijection between natural numbers and rationals in the unit interval. Our approach, by means of backward continued fractions, provides an \emph{explicit} action of $T$ on this numeration system (see Theorem \ref{action}).  We hope to satisfy readers with a taste for the explicit.\\

Roughly speaking, the arguments by Bonanno and Isola relating the odometer with continued fractions expansions, runs through the following lines of thought. The Minkowski Question Mark function, denoted by $
?$, conjugates the dynamics of the Tent map with that of the Farey map. Moreover, it turns out that the Gauss map is nothing but the \emph{accelerated} Farey map (or the corresponding \emph{jump transformation}). That is, 
\begin{equation*}
\text{Gauss map} \xleftrightarrow{accelerated} \text{Farey map} \xleftrightarrow{?} \text{Tent map}
\end{equation*}
{ Recall that the Gauss map is the dynamical system associated to the continued fraction expansion. On the other hand,} the Tent map is the dynamical system corresponding to the \emph{alternating binary expansion} (see \cite[Section 2.3.3]{dkr}). This latter expansion associates to each point of the unit interval a sequence of zeroes and ones. The alternating binary expansion can be identified, via the Farey map, with the continued fraction expansion of the point corresponding to $?^{-1}(x)$. In terms of expansions this can be summarized by,
\begin{equation*}
\text{Continued fractions} \xleftrightarrow{accelerated} \text{Farey map} \xleftrightarrow{?} \text{Alternated Binary expansion}
\end{equation*} 
These relations are studied in detail in \cite{kms}, see also \cite[Section 2.1]{i2}. 
This circle of ideas allows for the proof of the fact that $T$ is conjugated, via the Minkowski Question Mark function, to the odometer (as established in \cite{bi}). \\

Nevertheless, the odometer is  defined on the binary expansion of points in $[0,1]$. We propose an alternative approach, in which the binary expansion occurs naturally. The binary expansion is not directly related to the tent map, but to the doubling map. As above, the Minkowski Question Mark function makes a fundamental appearance. In what follows we will develop the following scheme, 
\begin{equation*}
\text{Renyi map} \xleftrightarrow{accelerated} \text{Back. Farey map} \xleftrightarrow{?} \text{Doubling map}
\end{equation*}
{ The Renyi map is the dynamical system associated to the backward continued fraction expansion. Thus, } in terms of expansions this is,
\begin{equation*}
\text{Backward continued fractions} \xleftrightarrow{accelerated} \text{Back. Farey map} \xleftrightarrow{?} \text{Binary expansion}
\end{equation*}

{{}{T}}herefore, the backward continued fraction expansion is the natural way to code if we are to conjugate to an odometer. That is why we obtain a simple and explicit formula for the odometric action of $T$.\\ 

 These two approaches also lead to different expressions for the Minkowski Question Mark function. Indeed, during the 1930s, Arnaud Denjoy \cite{d1,d2} obtained an analytical formula for the Question Mark function in terms of the continued fraction expansion of a number. In this article, we are naturally lead to a similar formula which, instead of continued fractions, describes the Question Mark function in terms of the backward continued fraction expansion, see Proposition \ref{?-b}.

\section{Counting rationals and backward continued fractions.}  

While it is simple to prove that the rational numbers are countable, explicit bijections between $\N$ are $\Q$ are less well known. Newman, solving a problem posed by Knuth \cite{k}, established the following,
\begin{proposition} \label{p}
The orbit of $x=0$ with respect to the map $F:\R^+ \cup\{0\} \to \R^+$, 
\begin{equation*}
F(x)= \frac{1}{2[x] -x+1},
\end{equation*}
where $[x]$ denotes the integer part of $x$, defines a bijection between $\N$ and $\Q^+$. That is, the map $n  \mapsto F^n(0) $ is a bijection between the natural and the positive rational numbers.
\end{proposition}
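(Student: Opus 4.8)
The plan is to reduce the statement to the combinatorics of the Calkin--Wilf tree and then to verify that the floor-based formula for $F$ implements exactly the breadth-first successor on that tree. Introduce Stern's diatomic sequence $s(n)$ defined by $s(0)=0$, $s(1)=1$ and, for $n\ge 0$,
\begin{equation*}
s(2n)=s(n),\qquad s(2n+1)=s(n)+s(n+1),
\end{equation*}
and set $q_n=s(n)/s(n+1)$ for $n\ge 1$. I would prove the proposition in two independent parts: (I) the assignment $n\mapsto q_n$ is a bijection from $\N$ onto $\Q^+$; and (II) the orbit of $0$ realizes this enumeration, i.e. $F^n(0)=q_n$ for every $n\ge 1$.

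For part (I) I would first record that $F$ is well defined: writing $x=[x]+(x-[x])$ with $x-[x]\in[0,1)$ gives $2[x]-x+1=[x]+1-(x-[x])>0$. Then I would run the classical Calkin--Wilf argument on the pairs $P_n=(s(n),s(n+1))$. An induction on the recurrence shows $\gcd(s(n),s(n+1))=1$, so every $q_n$ is a positive rational in lowest terms. The pair recurrence $P_{2n}=(a,a+b)$ and $P_{2n+1}=(a+b,b)$ for $P_n=(a,b)$ identifies $\{P_n\}_{n\ge1}$ with the vertices of the binary tree rooted at $(1,1)$ whose node $(a,b)$ has children $(a,a+b)$ and $(a+b,b)$. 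Surjectivity and injectivity then follow from Euclidean descent: every coprime pair $(a,b)$ of positive integers has a unique parent (subtract the smaller entry from the larger), and iterating terminates at $(1,1)$ in finitely many steps, so each coprime pair, hence each positive rational, occurs exactly once among the $q_n$.

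For part (II), I would rewrite the target $F(q_n)=q_{n+1}$ by clearing denominators: since $q_n\,s(n+1)=s(n)$, the equation $\tfrac{1}{2[q_n]-q_n+1}=\tfrac{s(n+1)}{s(n+2)}$ is equivalent to the arithmetic identity
\begin{equation*}
s(n+2)=\bigl(2[s(n)/s(n+1)]+1\bigr)\,s(n+1)-s(n).\tag{$\star$}
\end{equation*}
I would prove $(\star)$ for all $n\ge 0$ by strong induction, splitting on the parity of $n$. If $n=2m$ is even (the case $n=0$ included), the recurrence gives $s(n)=s(m)$, $s(n+1)=s(m)+s(m+1)$, $s(n+2)=s(m+1)$; since $s(m)/(s(m)+s(m+1))\in[0,1)$ the integer part vanishes and $(\star)$ holds outright. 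If $n=2m+1$ is odd, the recurrence gives $s(n)=s(m)+s(m+1)$, $s(n+1)=s(m+1)$, $s(n+2)=s(m+1)+s(m+2)$, and because $[(s(m)+s(m+1))/s(m+1)]=[s(m)/s(m+1)]+1$, a short computation reduces $(\star)$ at $n$ exactly to $(\star)$ at the strictly smaller index $m$, which is available by the inductive hypothesis. Finally $F(0)=1=q_1$, so $(\star)$ together with induction yields $F^n(0)=q_n$ for all $n\ge 1$; combined with part (I) this gives the desired bijection.

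The main obstacle is part (II), and specifically the identity $(\star)$: it is the precise arithmetic statement that the seemingly ad hoc floor formula for $F$ coincides with the tree successor. The delicate point is organizing the parity induction so that the even case closes immediately while the odd case telescopes down to a strictly smaller index; once $(\star)$ is in place the remainder is bookkeeping. Part (I) is the classical Calkin--Wilf enumeration and poses no real difficulty beyond the coprimality induction and the Euclidean-descent uniqueness.
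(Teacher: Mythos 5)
Your proof is correct; note that the paper itself does not prove Proposition \ref{p} but only cites the Calkin--Wilf construction \cite{cw} and refers to \cite[Chapter 17]{az} and \cite{msz} for the details, and those sources follow essentially your route. Your part (I) is the standard Calkin--Wilf enumeration via Stern's diatomic sequence, and your identity $(\star)$ with the parity induction is a correct, self-contained verification that Newman's floor formula implements the tree successor, so your proposal supplies exactly the details the paper delegates to the references.
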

The proof of this result is based on a construction done by Calkin and Wilf \cite{cw} in which, using  the Euclid tree, they define a sequence $b(n)$ so that the map $n \mapsto \frac{b(n)}{b(n+1)}$ is a bijection between $\N$ and $\Q^+$. A detailed proof of Proposition \ref{p} can be found in  \cite[Chapter 17]{az} and in \cite{msz}.

 \subsection{Backward continued fractions.} There is a wide range of different expansions of real numbers each of which captures different arithmetical, algebraic, or dynamical properties. As it will be established in this article, the \emph{natural} expansion associated to the map $F$ that enumerates the rational numbers is the backward continued fraction. This expansion is related to the Gauss reduction theory of indefinite integer quadratic forms, when considered in matrix terms as $SL_2(\Z)-$equivalent matrices (see \cite{ka,z}). Backward continued fraction expansions of real numbers have been extensively studied from the dynamical point of view at least since the 1957 work of Renyi \cite{re}. 
 
Every real number $x \in [0,1)$ can be written as a \emph{backward continued fraction} of the form
\begin{equation*}
x = 1- \textrm{ } \cfrac{1}{a_1 - \cfrac{1}{a_2 - \cfrac{1}{a_3 - \dots}}} = \textrm{ } [a_1, a_2, a_3, \dots],
\end{equation*}
where $a_i \in \N$ with $a_i \geq 2$. Irrational numbers  have a unique  backward continued fraction expansion which is infinite. Rational numbers, $p/q \in \Q$,  have two different expansions: a finite one $p/q=[a_1, a_2, \dots, a_n]$  and an infinite one of the form $p/q=[a_1, \dots, a_{n}+1, 2,2, \dots ]$ (that ends with a tail of $2$'s), \cite[Theorem 1.2]{ka}. In this article, unless we {{}{state}} the contrary,  we will always consider the infinite expansion of a rational number.  Moreover, any sequence $(a_i)_{i\geq 1}$ with $a_i\geq 2$, corresponds to the backward continued fraction of a unique real number in $[0,1)$. We refer to \cite{bl, ka} for an introduction to backward continued fractions. 

In his seminal article \cite{re}, Renyi  introduced the map $R:[0,1) \to [0,1)$ defined by
\begin{equation}\label{Renyi_form}
R(x)=\frac{1}{1-x} -\left[\frac{1}{1-x} \right].  
\end{equation}
We will refer to $R$ as the \emph{Renyi map}. The coefficients $a_i$ of the backward continued fraction of $x$ can be obtained by the following recursion:
\begin{equation*}
x=x_1,\quad  x_{n+1}=R(x_n),\quad a_n=\left[\frac{1}{1-x_n}\right]+1.
\end{equation*}
The Renyi map acts as the shift on the backward continued fraction (see \cite{ka} and \cite[Chapter 11]{dk}). That is, 
\begin{equation*}
\textrm{ if } x =[a_1, a_2, a_3, \dots] \textrm{ then } R(x)=[a_2, a_3, \dots].
\end{equation*}
The map $R$ also occurs as a Poincar\'e map for certain cross sections of the geodesic flow on the modular surface \cite{af,ka}.  Ergodic properties of this map have been studied, among others, by Roy Adler and Leopold Flatto \cite{af} and by Renyi himself \cite{re}, see also \cite{gh,i1}. This is a map with infinitely many branches and infinite topological entropy. It has a  fixed point at zero, $R(0)=0$, which is parabolic, that is $R'(0)$=1.  Renyi \cite{re} showed that there exists an infinite $\sigma-$finite invariant measure, $\mu_R,$  absolutely continuous with respect to the Lebesgue measure. It is defined by
\[ \mu_R(A) = \int_A \dfrac{1}{x} \ dx, \]
where $A \subset [0,1)$ is a Borel set.  There is no finite  invariant measure absolutely continuous with respect to the Lebesgue measure.  The map $R$, restricted to the irrational numbers is topologically  conjugated to the full shift on a countable alphabet. 

We introduce the following notation $\Sigma_{\geq n}=\{n, n+1, n+2, \dots\}^{\N}$. Endow the set $\N \cup\{0\}$ with the discrete topology and $\Sigma_{\geq n}$ with the product topology. The shift map $\sigma: \Sigma_{\geq n} \mapsto \Sigma_{\geq n}$ is defined by $\sigma(w_1 w_2 \dots)=(w_2 w_3 \dots)$. Denote by $\pi_R:\Sigma_{\geq 2} \mapsto  [0,1)$ the coding map that to each sequence $\omega=(w_1 w_ 2 \dots)$ associates the  real number   $\pi_R(\omega)=[w_1, w_2, \dots]$.  Let $\Sigma_{\bar 2}=\{(w_i)_i \in \Sigma_{\geq 2} : \text{there exist } k \in \N, \text{ such that  }  w_i=2 \text{ for } i \geq k\}$. \begin{lemma} \label{code}
The map $\pi_R:\Sigma_{\geq 2} \mapsto  [0,1)$ satisfies
\begin{equation}
R \circ \pi_R = \pi_R \circ \sigma.
\end{equation}
Moreover,  $\pi_R:\Sigma_{\geq 2}\setminus \Sigma_{\bar 2} \mapsto  [0,1) \setminus \Q$ is a homeomorphism.
\end{lemma}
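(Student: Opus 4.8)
The plan is to treat the two assertions separately, using the explicit inverse branches of $R$. For a digit $a\ge 2$ set $\psi_a(y)=1-\frac{1}{y+a-1}$; unwinding the recursion defining the coefficients shows that $\psi_a$ is exactly the inverse branch of $R$ producing the digit $a$, so that $x=\psi_{a_1}(R(x))$ whenever $a_1$ is the first coefficient of $x$. The first identity $R\circ\pi_R=\pi_R\circ\sigma$ is then immediate: it is just a restatement of the shift property recalled above, namely that $R([w_1,w_2,\dots])=[w_2,w_3,\dots]$, which one verifies directly from \eqref{Renyi_form} by checking that $\frac{1}{1-[w_1,w_2,\dots]}=w_1-1+[w_2,w_3,\dots]$ and that its integer part equals $w_1-1$.

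For the second assertion, I would first record that, by the existence and uniqueness of backward continued fraction expansions recalled above, $\pi_R$ is a well-defined bijection from $\Sigma_{\ge 2}$ onto $[0,1)$: every sequence is the expansion of a unique real, and every real has a unique infinite expansion (for irrationals it is the only one, for rationals it is the one ending in $2$'s, the finite one not lying in $\Sigma_{\ge 2}$). It then remains to identify $\pi_R(\Sigma_{\bar 2})$ with $\Q\cap[0,1)$. The inclusion $\Q\cap[0,1)\subseteq\pi_R(\Sigma_{\bar 2})$ is exactly the stated form $p/q=[a_1,\dots,a_n+1,2,2,\dots]$ of the infinite expansion of a rational. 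For the reverse inclusion, if $\omega\in\Sigma_{\bar 2}$ has a tail of $2$'s starting at position $k$, then $\sigma^{k-1}\omega=(2,2,\dots)$, whence $\pi_R(\sigma^{k-1}\omega)=[2,2,\dots]=0$; applying the first identity $k-1$ times gives $\pi_R(\omega)=\psi_{w_1}\circ\cdots\circ\psi_{w_{k-1}}(0)$, a composition of M\"obius maps with integer coefficients evaluated at $0$, hence a rational. Thus $\pi_R(\Sigma_{\bar 2})=\Q\cap[0,1)$, and $\pi_R$ restricts to a bijection $\Sigma_{\ge 2}\setminus\Sigma_{\bar 2}\to[0,1)\setminus\Q$.

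Finally, I would check that this bijection and its inverse are continuous. Writing $I(w_1,\dots,w_N)=\psi_{w_1}\circ\cdots\circ\psi_{w_N}([0,1))$ for the cylinder interval of reals whose first $N$ digits are $w_1,\dots,w_N$, continuity of $\pi_R$ amounts to showing $\diam I(w_1,\dots,w_N)\to 0$ for every sequence, since two sequences agreeing on their first $N$ entries have $\pi_R$-images in a common $I(w_1,\dots,w_N)$. Since $\psi_a'(y)=(y+a-1)^{-2}$ satisfies $\sup_{[0,1)}|\psi_a'|\le(a-1)^{-2}$, the chain rule gives $\diam I(w_1,\dots,w_N)\le\prod_{j\le N}(w_j-1)^{-2}$, which tends to $0$ whenever infinitely many $w_j\ge 3$. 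Continuity of the inverse is cleanest: the endpoints of $I(w_1,\dots,w_N)$ are $\psi_{w_1}\circ\cdots\circ\psi_{w_N}$ evaluated at $0$ and at $1$, hence rational, so every irrational $x$ lies in the open interior of its cylinder; therefore the digit map $x\mapsto(a_1(x),a_2(x),\dots)$ is locally constant in each coordinate near $x$, which is precisely continuity of $\pi_R^{-1}$ in the product topology.

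The main obstacle is the parabolic fixed point $R(0)=0$: the bound $\prod(w_j-1)^{-2}$ does not force the cylinders to shrink when the $w_j$ are eventually $2$, since each such factor equals $1$. I expect to dispose of this remaining case by the explicit computation $\psi_2^{\,n}(y)=\frac{y}{1+ny}$, giving $\psi_2^{\,n}([0,1))=[0,\tfrac{1}{n+1})$ and hence $\diam I(w_1,\dots,w_k,\underbrace{2,\dots,2}_{n})\to 0$ as $n\to\infty$ by continuity of the fixed outer map $\psi_{w_1}\circ\cdots\circ\psi_{w_k}$; together with the hyperbolic estimate above this covers all sequences and completes the proof of continuity, and hence of the homeomorphism.
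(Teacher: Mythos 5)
Your proposal is correct, but note that the paper itself contains no proof of Lemma \ref{code}: it is stated as a recollection of standard facts about the backward continued fraction algorithm, with the surrounding text deferring to \cite{ka} (in particular Theorem 1.2 there, for the existence and uniqueness statements) and to \cite[Chapter 11]{dk}. So there is no in-paper argument to match; what you have done is supply the details the authors chose to cite. Your route is the standard one and it is sound: the shift identity follows from $\frac{1}{1-x}=w_1-1+t$ with $t=\pi_R(\sigma\omega)\in[0,1)$, so the integer part is $w_1-1$; the identification $\pi_R(\Sigma_{\bar 2})=\Q\cap[0,1)$ correctly combines the quoted form of the infinite rational expansion with the observation that $\pi_R(\omega)=\psi_{w_1}\circ\cdots\circ\psi_{w_{k-1}}(0)$ is rational, using $[2,2,\dots]=0$; and your continuity argument rightly isolates the only delicate point, the parabolic branch $\psi_2$, where the uniform bound $\prod_j(w_j-1)^{-2}$ degenerates, and disposes of it with the exact formula $\psi_2^{\,n}(y)=y/(1+ny)$, while continuity of the inverse via rationality of cylinder endpoints (so every irrational sits in the open interior of its cylinder) is clean and complete. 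One small remark: you import injectivity of $\pi_R$ from the cited uniqueness of expansions, but your own machinery already yields it for free, since level-$N$ cylinders $I(w_1,\dots,w_N)$ are pairwise disjoint (the branch intervals $\left[\frac{a-2}{a-1},\frac{a-1}{a}\right)$ are disjoint and the $\psi_a$ are injective), so two sequences differing at some index have images in disjoint intervals; using this would make your proof fully self-contained rather than partially resting on \cite[Theorem 1.2]{ka}, which is itself what the lemma is meant to encapsulate.
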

Recall that sequences in $\Sigma_{\bar 2}$, those having a tail of 2's, represent rational numbers in the backward continued fraction expansion.

 \subsection{First return map of $F$.} In this article we  will be interested in the map $T:[0,1) \to [0,1)$ which corresponds to the first return time map of $F$ to set $[0,1)$.
 
\begin{definition}
Let $T:[0,1) \to [0,1)$ be the map defined by
\begin{equation*}
T(x)= \frac{1}{2\left[\frac{1}{1-x}	\right] + 1 - \frac{1}{1-x}}.
\end{equation*}
\end{definition}
The map $T$ is piecewise continuous in the partition of $[0,1)$ given by $\{n/(n+1) : n\in \N \}$, see Figure \ref{fig}.

\begin{remark}
The following observations hold:
\begin{enumerate}
\item Let $r:[0,1) \to \N $ be the \emph{first return time} of $F$ to $[0,1)$, which is defined by $r(x):= \inf \{ n \in \N : F^{n}(x) \in [0,1) \}$. We have that, $T(x)=F^{r(x)}(x)$.
\item $F(\Q\cap [0,1))=\Q\cap[1,\infty)$ and $F(\Q\cap [1,\infty))=\Q\cap (0,1)$.
\item The previous observation implies that for every $x \in [0,1)$ we have  $r(x)=2$. Thus,  $T(x)=F^2(x)$.
\item It is a consequence of Proposition \ref{p} and the form of $F$ that the map $n \mapsto T^n(0)$ is a bijection between the natural numbers $\N$ and   $\Q \cap (0,1)$.  
\end{enumerate}
\end{remark} 


 \subsection{The action of $T$ on the backward continued fraction.}

The following results describe the action of the map $T$ on the backward continued fraction of $x \in [0,1)$. Interestingly, it is just the use of backward continued fractions that allows for the proof of such a  neat formula for $T$. In later sections we will prove that an action of the type described here corresponds to that of a simple dynamical system called {{}{the}} \emph{odometer}. We stress, however, that we do not {{}{require}} that to obtain an explicit formula for the action of $T$.

 \begin{lemma}\label{TyR}
 If $x \in [0,1)$ has a backward continued fraction $x=[a_1, a_2, \dots]$ then 
\begin{equation}
T(x)=  \frac{1}{a_1 - R(x)},
\end{equation}
where $R(x)$ is the Renyi map. 
 \end{lemma}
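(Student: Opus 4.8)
The plan is to reduce the identity to a single elementary algebraic manipulation by unwinding the definitions of $T$, of $R$, and of the first coefficient $a_1$, all in terms of the common quantity $1/(1-x)$. Writing $y := 1/(1-x)$ and $m := [1/(1-x)]$, the recursion recalled above (with $x_1 = x$) gives at once that $a_1 = [1/(1-x_1)] + 1 = m+1$ and that $R(x) = y - m$. In these variables the definition of $T$ reads $T(x) = 1/(2m + 1 - y)$, so both sides of the asserted equality are now expressed through the single pair $(m,y)$.

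The second step is to compute the denominator on the right-hand side in the same variables. One has
\begin{equation*}
a_1 - R(x) = (m+1) - (y - m) = 2m + 1 - y,
\end{equation*}
which is exactly the denominator appearing in $T(x)$. Substituting back yields $T(x) = 1/(a_1 - R(x))$, as claimed.

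I do not expect a genuine obstacle here: the whole content of the lemma is that the defining formula of $T$ (arising as the first-return map of $F$) was engineered so that its denominator coincides with $a_1 - R(x)$ once the backward continued fraction bookkeeping is in place. The only point worth verifying for rigor is that the expression is well defined and lands in $[0,1)$. Since $x \in [0,1)$ forces $y = 1/(1-x) \in [1,\infty)$ and hence $m = [y] \geq 1$, the bound $m \leq y < m+1 \leq 2m$ gives $2m + 1 - y > 1$; thus the denominator never vanishes and $T(x) \in (0,1)$. This is precisely the advantage advertised in the introduction: in backward continued fraction coordinates the defining formula of $T$ collapses to the transparent expression $1/(a_1 - R(x))$.
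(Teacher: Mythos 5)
Your proof is correct and follows essentially the same route as the paper: both arguments simply substitute $a_1 = \left[\frac{1}{1-x}\right]+1$ and $R(x) = \frac{1}{1-x} - \left[\frac{1}{1-x}\right]$ into the definition of $T$ and verify the denominators agree. Your added check that the denominator exceeds $1$ (so $T(x) \in (0,1)$) is a harmless extra verification the paper omits.
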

 
 \begin{proof}
 Since $a_1= \left[\frac{1}{1-x}\right]+1$, we have
  \begin{eqnarray*}
 T(x)=\frac{1}{\left[\frac{1}{1-x}	\right] + 1 -\left( \frac{1}{1-x} -\left[\frac{1}{1-x}	\right] \right)}=  \frac{1}{a_1 - R(x)}.
  \end{eqnarray*}
 \end{proof}

 \begin{theorem} \label{action}
 If $x=[a_1, a_2, a_3, \dots] \in [0,1)$ then
 \begin{equation*}
 T(x)=[\underbrace{2,2,\dots,2}_{(a_1-2)\textrm{-times}}, a_2+ 1, a_3, a_4, \dots].
 \end{equation*}
 \end{theorem}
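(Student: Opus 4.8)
The plan is to read off the value of the displayed right-hand side directly from two elementary operations on backward continued fractions—prepending a digit and iterating it—and then to check that the resulting number coincides with the closed form $T(x)=1/(a_1-R(x))$ supplied by Lemma \ref{TyR}. The first step is to record the \emph{prepend} formula: if $z=[w_1,w_2,\dots]\in[0,1)$ is the value of a tail and $b\ge 2$ is an integer, then
\begin{equation*}
[b,w_1,w_2,\dots]=1-\frac{1}{b-1+z}.
\end{equation*}
This follows immediately from the definition of the expansion, after noting that the nested fraction $\cfrac{1}{w_1-\cfrac{1}{w_2-\cdots}}$ equals $1-z$. Specializing to $b=2$ gives the \textbf{prepend-a-two} map $\Phi(z):=1-\frac{1}{1+z}=\frac{z}{1+z}$. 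The subtlety I must track carefully throughout is precisely this convention, namely that the inner fraction equals $1-z$ rather than $z$; this is the only real place the argument can go wrong.

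The second step is to iterate $\Phi$. Since $\Phi$ is the Möbius transformation attached to $\left(\begin{smallmatrix}1&0\\1&1\end{smallmatrix}\right)$, a one-line induction (or the matrix power) yields $\Phi^k(z)=\frac{z}{1+kz}$. Hence prepending $(a_1-2)$ twos to the sequence $[a_2+1,a_3,a_4,\dots]$ produces the value $\Phi^{a_1-2}(z_0)$, where $z_0:=[a_2+1,a_3,a_4,\dots]$.

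The third step is to express $z_0$ through $R(x)=[a_2,a_3,\dots]$. Writing $y=R(x)$ and $s=R(y)=[a_3,a_4,\dots]$, the prepend formula gives $y=1-\frac{1}{a_2-1+s}$ and $z_0=1-\frac{1}{a_2+s}$; eliminating $s$ via $a_2-1+s=\tfrac{1}{1-y}$ collapses the second identity to the clean form $z_0=\frac{1}{2-y}$. Substituting into $\Phi^{a_1-2}(z_0)=\frac{z_0}{1+(a_1-2)z_0}$ and simplifying gives
\begin{equation*}
T(x)=\frac{z_0}{1+(a_1-2)z_0}=\frac{1}{a_1-y}=\frac{1}{a_1-R(x)},
\end{equation*}
which is exactly $T(x)$ by Lemma \ref{TyR}. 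By the uniqueness of backward continued fraction expansions (any digit sequence with all entries $\ge 2$ is the expansion of a unique point in $[0,1)$), matching values identifies the expansion of $T(x)$ with the displayed sequence.

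Finally, I would attend to the bookkeeping that makes the statement well posed: one checks $a_1-2\ge 0$, so the block of twos makes sense, with the boundary case $a_1=2$ meaning that no twos are prepended and $T(x)=z_0=[a_2+1,a_3,\dots]$; and one checks that every output digit is $\ge 2$ (the prepended twos, then $a_2+1\ge 3$, then the unchanged tail $a_3,a_4,\dots$), so the sequence is a legitimate backward continued fraction. I expect no genuine obstacle in any of this—the computation is short once the prepend map and its iterate are isolated—and the only thing demanding care is consistently carrying the $1-R(x)$ convention through each prepend step.
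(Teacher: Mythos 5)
Your proposal is correct, and it reorganizes the paper's argument rather than repeating it. The paper proves Theorem \ref{action} by induction on the first digit $a_1$: the base case $a_1=2$ is the computation $\frac{1}{2-R(x)}=1-\frac{1}{a_2+R^2(x)}$, and the inductive step rewrites $\frac{1}{a_1-R(x)}=1-\frac{1}{2-1+\frac{1}{(a_1-1)-R(x)}}$, i.e., it prepends one $2$ at a time and reads off the new digit directly, constructing the expansion of $T(x)$ digit by digit. Your prepend-a-two map $\Phi(z)=z/(1+z)$ is exactly the M\"obius transformation hidden in that inductive step (note $\Phi\bigl(\frac{1}{c-y}\bigr)=\frac{1}{c+1-y}$), but you shortcut the induction over the statement with the closed-form iterate $\Phi^k(z)=z/(1+kz)$ together with the identity $[a_2+1,a_3,\dots]=\frac{1}{2-R(x)}$, then match values against Lemma \ref{TyR} and conclude by uniqueness of expansions. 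So the ingredients are identical (Lemma \ref{TyR} and the shift/prepend identity \eqref{rep}); what differs is the logic: the paper constructs the expansion and never explicitly invokes injectivity of the coding, whereas you verify an equality of real numbers and then must appeal to uniqueness. Your version buys a one-shot computation and makes visible a step the paper leaves implicit (even the paper's base case, passing from $1-\frac{1}{a_2+R^2(x)}$ to the sequence $[a_2+1,a_3,\dots]$, tacitly uses that identification); the paper's version buys a self-contained digit-by-digit derivation. One small wording point: the uniqueness fact you need is that each $x\in[0,1)$ has at most one \emph{infinite} expansion with digits $\geq 2$ (injectivity of $\pi_R$ on $\Sigma_{\geq 2}$), not merely that each digit sequence determines a unique point, which is the direction your parenthetical states; since both facts are recorded in Section 2 of the paper (rationals having exactly one infinite expansion, ending in a tail of $2$'s, and irrationals a unique one), this is a phrasing slip, not a gap. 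Your checks that $a_1-2\geq 0$ and that every output digit is $\geq 2$ are the right well-posedness bookkeeping, also left implicit in the paper.
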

 
 \begin{proof}
Recall that if $x=[a_1, a_2,  \dots] \in [0,1)$ then
\begin{equation} \label{rep}
x=1- \textrm{ } \cfrac{1}{a_1 - \cfrac{1}{a_2 - \dots}}= 1 - \frac{1}{a_1- 1 + R(x)},
\end{equation}
since the Renyi map acts as the shift on the backward continued fraction.
The proof will be done by induction on the first digit of the expansion.  Let us first consider the case in which $a_1=2$. Due to Lemma \ref{TyR}, we have that
\begin{eqnarray*}
T(x) = \frac{1}{2- R(x)}
= \frac{1}{1 + \frac{1}{a_2-1+R^2(x)}}=
\frac{a_2-1+R^2(x)}{a_2 +R^2(x)}= 1 - \frac{1}{a_2 +R^2(x)},
\end{eqnarray*}
which implies that 
\begin{equation*}
T(x)= [a_2+1, a_3,  \dots].
\end{equation*}
Thus, the case $a_1=2$ is proved. Assume now that for $a_1=n\geq 2$ we have 
\begin{equation*}
T(x)=[\underbrace{2,2,\dots,2}_{(n-2)\textrm{-times}}, a_2+ 1, a_3, a_4, \dots].
\end{equation*}
Consider the case in which 
$a_1=n+1$. We have that
\begin{eqnarray*}
T(x)= \frac{1}{a_1-R(x)}= 1- \frac{1}{\frac{a_1-R(x)}{a_1-R(x)-1}}= 1- \frac{1}{2- \left(1 -\frac{1}{a_1-1-R(x)} \right)}.
\end{eqnarray*}
The inductive assumption on $a_1-1=n$ implies that
\begin{equation*}
\frac{1}{(a_1-1)-R(x)}=[\underbrace{2,2,\dots,2}_{(n-2)\textrm{-times}}, a_2+ 1, a_3, a_4, \dots].
\end{equation*}
Therefore,
\begin{equation*}
T(x)=[ \underbrace{2,2,\dots,2}_{(n-1)\textrm{-times}}, a_2+ 1, a_3, a_4, \dots].
\end{equation*}
This concludes the proof.
\end{proof}

 \section{Binary expansions and the dyadic odometer.}

From the topological point of view dynamical systems are classified by topological conjugacies. That is, two dynamical systems, $A:X \to X$ and $B:Y \to Y$, share the same (topological) dynamical properties, if there exists {{}{a}} homeomorphism $g: X\to Y$ such that $g \circ A \circ g^{-1}= B$. In Section \ref{sec:conj} we will prove that the dynamical system $T$ is topologically conjugated to a very well understood system called {{}{the}} \emph{odometer}.

An odometer  is a device used for measuring the distance traveled by a vehicle. It was traditionally made out of several disks that rotate moving in a unit. Once the numbers $9$ passed the disk returned to zero and the rightmost disk moves in a unit. The mathematical odometer version we will be interested in consists of infinitely many disks and has only two digits.  Endow the space $\{0,1\}$ with the discrete topology and $\Sigma_2= \{0,1\}^{\N}$ with the product topology.   The  {\em symbolic dyadic odometer} is the map $D: \Sigma_2 \to \Sigma_2$ defined by
\begin{equation*}
D((\underbrace{1,\dots, 1}_{k\textrm{-times}},0,w_{k+2},w_{k+3},\dots))=(\underbrace{0,\dots, 0}_{k\textrm{-times}},1,w_{k+2},w_{k+3},\dots),
\end{equation*}
and $D((1,1, \dots))=(0,0, \dots )$.  The map $D$ is also known as the von Neumann-Kakutani odometer, adding machine or $2-$odometer. It can also be thought of as the system induced by the addition of $\underline{1}=(1,0,0, \dots)$. This group theoretic point of view allows for several generalizations. From the dynamical systems point of view the odometer is  well understood. Among the many known properties of this system we will be interested in the following (see \cite[pp.25-26]{aa} and \cite[Section 4.5.2]{b}).

\begin{proposition} \label{erg_odo}
The symbolic dyadic odometer satisfies:
\begin{enumerate}
\item \label{1} For every $n \in \N$ and $w \in \Sigma_2$ we have,
\begin{equation*}
\left\{ (D^k w)_1, \dots , (D^k w)_n ): 0 \leq k \leq 2^n -1 \right\} = \{0,1\}^n.
\end{equation*}
 \item The map $D$ is minimal, that is, every orbit is dense.
 \item The map $D$ is uniquely ergodic and its unique invariant measure is the $(1/2, 1/2)$ Bernoulli measure.
\item The map $D$ has zero topological entropy.
\end{enumerate}
\end{proposition}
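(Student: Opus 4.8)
The statement to prove is Proposition \ref{erg_odo}, listing four standard properties of the dyadic odometer $D$. Let me sketch a proof plan.

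\medskip

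The plan is to prove the four properties more or less in sequence, deriving the later ones from the earlier ones, since property \eqref{1} is the combinatorial engine that drives minimality and unique ergodicity.

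First I would establish property \eqref{1} by induction on $n$. The key observation is that $D$ is exactly ``add $\underline 1 = (1,0,0,\dots)$ with carrying,'' so if we identify a finite prefix $(w_1,\dots,w_n)$ with the integer $\sum_{i=1}^n w_i 2^{i-1} \in \{0,1,\dots,2^n-1\}$, then applying $D$ increments this integer by $1$ modulo $2^n$ (the carry only propagates beyond position $n$ when the prefix is all ones, i.e.\ equals $2^n-1$, and then it wraps to $0$). Thus the map $k \mapsto$ (first $n$ coordinates of $D^k w$) cycles through all of $\{0,\dots,2^n-1\}$ as $k$ runs over $0,\dots,2^n-1$, which is precisely the claim that the prefixes realize every element of $\{0,1\}^n$. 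I would phrase this carefully: for fixed $w$, the first $n$ coordinates of $D^k(w)$ depend only on $k \bmod 2^n$ and the first $n$ coordinates of $w$, and they run bijectively over $\{0,1\}^n$.

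Next, minimality (property (b)) follows directly from \eqref{1}. Given any $w$ and any basic open cylinder $[\epsilon_1,\dots,\epsilon_n]$, property \eqref{1} guarantees some $k \in \{0,\dots,2^n-1\}$ with $(D^k w)_i = \epsilon_i$ for $i \le n$, so the forward orbit of $w$ meets every cylinder; since cylinders form a basis for the product topology on $\Sigma_2$, the orbit is dense. For unique ergodicity (property (c)), I would use the standard criterion that $D$ is uniquely ergodic if and only if the ergodic averages $\frac1N \sum_{k=0}^{N-1} f(D^k w)$ converge uniformly to a constant for every continuous $f$. By a density/Stone–Weierstrass argument it suffices to check this on indicator functions of cylinders (equivalently, on the finite-prefix functions). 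For the cylinder $[\epsilon_1,\dots,\epsilon_n]$, property \eqref{1} says that among any $2^n$ consecutive iterates exactly one lands in the cylinder, so the Birkhoff averages converge uniformly to $2^{-n}$, independent of $w$. Hence the unique invariant measure assigns mass $2^{-n}$ to each length-$n$ cylinder, which is exactly the $(1/2,1/2)$ Bernoulli measure.

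Finally, zero topological entropy (property (d)) also follows from the equidistribution in \eqref{1}. The main obstacle, and the only genuinely delicate point, is the bookkeeping of the carry in property \eqref{1}: one must check that the carry generated by adding $\underline 1$ never corrupts the first $n$ coordinates except through the honest mod-$2^n$ increment, including the wraparound case $D((1,1,\dots)) = (0,0,\dots)$. For entropy, I would bound the number of $(n,\varepsilon)$-separated sets: since any two points agreeing on coordinates $1,\dots,n+m$ stay within distance $\varepsilon$ under the first $m$ iterates (the $m$-fold itinerary through length-$n$ cylinders is determined by the first $n+m$ coordinates, and by \eqref{1}-type counting there are at most $2^{n+m}$ such itineraries, not $2^{\,\text{(something)}\cdot m}$ many), the separated-set count grows only polynomially $\cdot 2^{n}$ rather than exponentially in $m$; taking $\frac1m \log$ and then $m\to\infty$ gives entropy $0$. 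I expect the carry bookkeeping in \eqref{1} to be the step requiring the most care, while (b), (c), (d) are then short deductions.
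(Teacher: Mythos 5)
The paper does not actually prove this proposition: it records the statement and cites \cite[pp.25-26]{aa} and \cite[Section 4.5.2]{b}, so your sketch must stand on its own. Your treatment of \eqref{1}, minimality, and unique ergodicity is correct and is the standard argument: identify the length-$n$ prefix with an integer in $\{0,\dots,2^n-1\}$ on which $D$ acts as $+1 \pmod{2^n}$ (the carry either resolves inside the prefix or wipes an all-ones prefix to zero, which is exactly the mod-$2^n$ wraparound, including the special point $(1,1,\dots)$); minimality follows since the orbit meets every cylinder; and unique ergodicity follows from uniform convergence of Birkhoff averages on cylinder indicators --- each window of $2^n$ consecutive iterates visits each $n$-cylinder exactly once, so the averages converge uniformly to $2^{-n}$ with error at most $2^n/N$, which identifies the unique invariant measure as the $(1/2,1/2)$ Bernoulli measure after a Stone--Weierstrass approximation by locally constant functions.

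The entropy step, however, is inconsistent as written. You assert that the $m$-step itinerary through length-$n$ cylinders is determined by the first $n+m$ coordinates and count at most $2^{n+m}$ itineraries; that estimate only yields $\frac{1}{m}\log 2^{n+m} \to \log 2$, i.e.\ $h_{top}(D) \le \log 2$, not $0$, and your subsequent claim that the separated-set count is polynomial times $2^n$ does not follow from it. The repair is already contained in your own proof of \eqref{1}: since the carry only propagates forward, the first $n$ coordinates of $D^k w$ depend \emph{only} on the first $n$ coordinates of $w$ and on $k \bmod 2^n$ --- the prefix dynamics is autonomous. Hence the entire itinerary of $w$ through $n$-cylinders, for every $m$, is determined by the $n$-prefix of $w$ alone, so there are at most $2^n$ distinct itineraries uniformly in $m$; any $(m,\varepsilon)$-separated set with $2^{-n} < \varepsilon$ thus has at most $2^n$ elements, and $\frac{1}{m}\log 2^n \to 0$. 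Equivalently, this prefix-autonomy (together with the fact that the prefix action $+1 \bmod 2^n$ is a bijection of $\{0,1\}^n$) shows $d(Dw, Dw') = d(w,w')$ for the usual metric $d(w,w') = 2^{-\min\{i \,:\, w_i \neq w'_i\}}$, i.e.\ $D$ is an isometry --- a rotation of the compact group of dyadic integers --- and isometries of compact metric spaces have zero topological entropy.
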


It readily follows from Proposition \ref{erg_odo} \eqref{1} that the orbit of $w=(0,0, \dots)$ with respect to $D$ induces  a bijection between the set of natural numbers and  the set of all sequences in $\Sigma_2$ ending with a tail of zeroes. Indeed,  it suffices to consider $n \mapsto D^n(0,0,\dots)$. This odometer allow us to \emph{count} sequences ending with a tail of zeroes.

The dyadic odometer can be realized in the unit interval. It should come as no surprise that this realization is related to the binary expansion of real numbers. Every $x \in [0,1)$ can be written in binary form as
\begin{equation*}
x= \sum_{i=1}^{\infty} \frac{w_i}{2^i},
\end{equation*}  
where $w_i \in \{0,1\}$.  Thus, to every $x \in [0,1)$  we can associate a sequence of  zeroes and ones,
\begin{equation*}
x=(w_1, w_2, w_3, \dots).
\end{equation*}
Let $\mathbb{D}= \{ x \in [0,1) : x= \frac{k}{2^n} \text{ for } k \in \N \cup\{0\} , n \in \N\}$ be the set of \emph{dyadic numbers} in $[0,1)$. Points in $[0,1) \setminus \mathbb{D}$ have a unique binary representation, while  those  in $\mathbb{D}$ have exactly two representations, one ending with a tail of zeroes and the other with a tail of ones. Unless stated {{}{to}} the contrary, we will always consider the binary expansion ending with a tail of  zeroes.  In view of this, there is a map
 $\pi_2:[0,1) \to \Sigma_{2}$ which assigns to each $x \in [0,1)$ the unique sequence $w \in \Sigma_{2}$  
 that corresponds to its binary expansion and does not end with a tail of ones. Moreover,  if we denote by
 $\Sigma_2^*= \Sigma_2 \setminus  \{w \in \Sigma_2 : w \text{ ends with a tail of } 1's \text{ or  of } 0's \} $ then the map
  \begin{equation*}
 \pi_2:[0,1) \setminus \mathbb{D} \mapsto \Sigma_2^*
  \end{equation*}
 is {{}{a}} homeomorphism.
 
\begin{definition}
Let  $(I_n)_n$ be the partition of the {{}{unit}} interval, where $I_n=[\frac{2^{n-1}-1}{2^{n-1}}, \frac{2^n-1}{2^{n}})$, for $n \in \N$. The 
interval realization of the \emph{dyadic odometer} is the interval map $D_2: [0,1) \to [0,1)$ defined by
\begin{equation*}
D_2(x) = x +\frac{3}{2^n}-1, 
\end{equation*}
for $x \in I_n$.
\end{definition}

\begin{proposition} \label{con_d}
The map $D: \Sigma_2^*\to \Sigma_2^*$ is topologically conjugated to $D_2: [0,1]\setminus \mathbb{D}  \to [0,1)   \setminus \mathbb{D}$ via the map $\pi_2$. That is, if $x \in [0,1]\setminus \mathbb{D}$ then
\begin{equation*}
(D \circ \pi_2)(x)=  ( \pi_2 \circ D_2)(x). 
\end{equation*}
\end{proposition}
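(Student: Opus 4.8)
The plan is to establish the intertwining identity $(D \circ \pi_2)(x) = (\pi_2 \circ D_2)(x)$ for each individual $x \in [0,1) \setminus \mathbb{D}$; since it was already recorded that $\pi_2 : [0,1) \setminus \mathbb{D} \to \Sigma_2^*$ is a homeomorphism, this pointwise identity is exactly the assertion that $\pi_2$ conjugates $D_2$ to $D$, and nothing further is needed for the topological conjugacy. So the whole proof reduces to a digit computation.

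First I would set up a dictionary between the partition $(I_n)_n$ and binary digit patterns. Writing $I_n = [1 - 2^{-(n-1)}, 1 - 2^{-n})$, one sees that the $I_n$ are pairwise disjoint with $\bigcup_{n \ge 1} I_n = [0,1)$, so every $x$ lies in a unique $I_n$. Since $x \in [0,1) \setminus \mathbb{D}$ is non-dyadic, its binary code $\pi_2(x) = (w_1, w_2, \dots) \in \Sigma_2^*$ is not eventually constant and in particular contains a $0$. I claim that $x \in I_n$ precisely when the first $0$ occurs in position $n$, that is $w_1 = \cdots = w_{n-1} = 1$ and $w_n = 0$: indeed, if the digits have this form then $\sum_{i=1}^{n-1} 2^{-i} = 1 - 2^{-(n-1)} \le x < 1 - 2^{-(n-1)} + 2^{-n} = 1 - 2^{-n}$, so $x \in I_n$, and because both the intervals $I_n$ and the digit-pattern sets partition the non-dyadics, the converse is automatic.

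With this dictionary in hand, fix $x \in I_n$ so that $\pi_2(x) = (\underbrace{1, \dots, 1}_{(n-1)\text{-times}}, 0, w_{n+1}, w_{n+2}, \dots)$. Applying the definition of $D$ with $k = n-1$ gives
\[
(D \circ \pi_2)(x) = (\underbrace{0, \dots, 0}_{(n-1)\text{-times}}, 1, w_{n+1}, w_{n+2}, \dots).
\]
On the other hand, writing $x = 1 - 2^{-(n-1)} + \sum_{i \ge n+1} w_i 2^{-i}$ and using $D_2(x) = x + 3 \cdot 2^{-n} - 1$, the arithmetic $-2^{-(n-1)} + 3 \cdot 2^{-n} = 2^{-n}$ collapses the constant terms and yields
\[
D_2(x) = \frac{1}{2^n} + \sum_{i \ge n+1} \frac{w_i}{2^i},
\]
whose (non-dyadic) binary expansion is exactly $(\underbrace{0, \dots, 0}_{(n-1)\text{-times}}, 1, w_{n+1}, \dots)$, matching the previous display. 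The only bookkeeping to double-check is that $\pi_2 \circ D_2$ is even defined, i.e.\ that $D_2$ preserves $[0,1) \setminus \mathbb{D}$; this is clear, since $D_2$ leaves the tail $(w_{n+1}, w_{n+2}, \dots)$ untouched, so non-dyadicity (equivalently, not being eventually constant) is preserved and the expansion read off above is genuinely the canonical one. I expect the only real friction to be in cleanly proving the $I_n$-versus-digit-pattern dictionary of the second paragraph; once that is nailed down, the conjugacy is the single line of arithmetic above.
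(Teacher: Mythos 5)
Your proposal is correct and follows essentially the same route as the paper's proof: identify $x \in I_n$ with the binary pattern $\underbrace{1\dots1}_{n-1}0$ and observe that adding $\frac{3}{2^n}-1$ replaces this prefix by $\underbrace{0\dots0}_{n-1}1$ while leaving the tail untouched. You simply spell out the details the paper compresses into ``The result then follows,'' including the useful check that $D_2$ preserves non-dyadicity so that $\pi_2 \circ D_2$ is well defined.
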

\begin{proof}
Note that if $x \in I_n$ then its binary expansion  starts with $0.\underbrace{1\dots 1}_{n-1}0$. Also,
\[
D_2(x)=x-\left(1-\frac{1}{2^{n-1}}\right)+\frac{1}{2^n}=x-(0.\underbrace{1\dots 1}_{n-1})+(0.\underbrace{0\dots0}_{n-1}1).
\]
The result then follows.
\end{proof}

The map $D_2$ is an infinite interval exchange transformation. From Proposition \ref{con_d} and the properties of $D$ the following holds.

\begin{proposition} \label{erg_odometro}
The  dyadic odometer $D_2$ has zero topological entropy, it is minimal and uniquely ergodic. It preserves the Lebesgue measure of $[0,1)$.  
\end{proposition}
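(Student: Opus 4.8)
The plan is to deduce Proposition \ref{erg_odometro} entirely from the symbolic picture via the conjugacy of Proposition \ref{con_d}, transporting each of the four properties of $D$ listed in Proposition \ref{erg_odo} across the homeomorphism $\pi_2$. Since $\pi_2:[0,1)\setminus\mathbb{D}\to\Sigma_2^*$ is a homeomorphism intertwining $D_2$ and $D$, every purely topological invariant passes back and forth: minimality (every orbit dense) and zero topological entropy are preserved under topological conjugacy, so these two follow immediately from the corresponding items in Proposition \ref{erg_odo}. For unique ergodicity one likewise uses that conjugacies induce an affine bijection between the respective spaces of invariant Borel probability measures; hence $D_2$ has exactly one invariant probability measure because $D$ does.

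First I would carefully address the fact that the conjugacy is only stated on the full-measure, residual subsets $[0,1)\setminus\mathbb{D}$ and $\Sigma_2^*$, not on all of $[0,1)$ and $\Sigma_2$. The complements are the countable sets of dyadic points and of eventually-constant sequences, which form a single $D$-orbit-type structure (tails of $0$'s or $1$'s); I would note that these countable invariant exceptional sets carry no invariant probability mass for the relevant measures and do not affect density of orbits or entropy, so working on the homeomorphic cores is harmless. Concretely, minimality of $D$ on $\Sigma_2^*$ transports to minimality of $D_2$ on $[0,1)\setminus\mathbb{D}$, and one checks separately that the finitely-described dyadic orbits are also dense (indeed the orbit of $0$ visits every $I_n$), so $D_2$ is minimal on all of $[0,1)$.

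Next I would identify the invariant measure explicitly. The unique invariant measure of $D$ is the $(1/2,1/2)$ Bernoulli measure on $\Sigma_2$, and under the coding map $\pi_2$ this Bernoulli measure is exactly the push-forward of Lebesgue measure: the cylinder $[w_1,\dots,w_n]$ has Bernoulli mass $2^{-n}$ and corresponds to a dyadic subinterval of length $2^{-n}$. Therefore $(\pi_2)_*\mathrm{Leb}$ equals the Bernoulli measure, and since Lebesgue gives the dyadic set zero measure this correspondence is measure-theoretically exact. Pulling the invariance $D_*(\text{Bernoulli})=\text{Bernoulli}$ back through $\pi_2$ yields $(D_2)_*\mathrm{Leb}=\mathrm{Leb}$; alternatively one verifies directly that on each $I_n$ the map $D_2$ is a translation by $3/2^n-1$, so it preserves Lebesgue measure branch by branch. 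Combining this with the transported unique ergodicity shows Lebesgue is \emph{the} unique invariant probability measure.

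The main obstacle I anticipate is the bookkeeping around the exceptional sets: one must be sure that restricting to $[0,1)\setminus\mathbb{D}$ does not silently discard an invariant measure supported on $\mathbb{D}$ and does not weaken the minimality claim on the full interval. This is resolved by observing that $\mathbb{D}$ is a countable union of (eventually) periodic-type orbits carrying no atom-free invariant probability, and that the single orbit $\{D_2^n(0)\}$ is already dense by Proposition \ref{erg_odo}\eqref{1}, so density and unique ergodicity upgrade from the core to the whole space. Everything else is a routine transfer of conjugacy-invariant properties, so once the exceptional sets are handled the proposition follows directly from Propositions \ref{erg_odo} and \ref{con_d}.
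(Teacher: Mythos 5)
Your proposal is correct and follows essentially the same route as the paper, which simply derives the proposition from the conjugacy of Proposition \ref{con_d} together with the properties of $D$ in Proposition \ref{erg_odo}; your added bookkeeping about the countable exceptional sets $\mathbb{D}$ and the eventually-constant sequences (which the paper leaves implicit) is exactly the right way to make that transfer rigorous. The only slight imprecision is calling the dyadic orbits ``periodic-type'' --- they are infinite, non-recurrent orbits, and the correct reason no invariant probability charges $\mathbb{D}$ is that an atom would have equal mass along an infinite orbit, contradicting finiteness.
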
 

\begin{remark} \label{orbita_diadica}
Note that the orbit of $x=0$ with respect to $D_2$ induces a bijection between the natural numbers and the dyadic numbers. 
\end{remark}

\section{Accelerated binary expansions and the Linear Renyi map.}

There is a natural way to relate to a dynamical system an \emph{accelerated} version of it. This speed up system is sometimes called {{}{a}}  \emph{jump transformation} and was studied in detail by  Fritz Schweiger \cite{sc}. Our starting system is the following map.
\begin{definition}
The \emph{Doubling map} is the function $B:[0,1] \to [0,1]$ defined by
\begin{eqnarray*}
B(x)=
\begin{cases}
2x & \text{ if } x \in [0,1/2) \\
2x-1& \text{ if } x \in [1/2, 1].
\end{cases}
\end{eqnarray*}
\end{definition}
This system is closely  related to the binary expansion. Indeed, the map $B$ acts as the shift in the binary expansion of a number in $[0,1]$.

Consider now the first hitting time map in  $[0,1/2)$. This function  defined away from the pre-images of $x=0$ by
\begin{equation*}
\tau_B(x):= 1 + \min \left\{n \in \N \cup\{0\} : 	B^n(x) \in \left[0, \frac{1}{2} \right)	\right\}.
\end{equation*}
For every $x \in [0,1]$, not in the boundary  of the partition defined by $(I_n)_n$, we define the \emph{linear Renyi map} (or the jump transformation for $B$) by
\begin{equation*}
R_2(x)= B^{\tau_B(x)}(x).
\end{equation*}
The map $R_2$ is  piecewise linear and full-branched in the partition $(I_n)_n$. It has infinite topological entropy and preserves the Lebesgue measure. It  will be convenient to describe symbolically the action of $R_2$.  For every integer $k \geq 0$ we  define the block

\begin{eqnarray*}
b_{k}=
\begin{cases}
0 & \text{ if  } k=0,  \\
\underbrace{1 \dots  1}_{k\textrm{-times}} 0 & \text{ if } k \geq 1.
\end{cases}
\end{eqnarray*}

\begin{lemma}\label{lema_bk}
The binary expansion of every real number $x\in [0,1)$ can be written in a unique way as an infinite concatenation of blocks $b_k$'s, 
\[
x=0.b_{k_1}b_{k_2}b_{k_3}\dots 
\]
\end{lemma}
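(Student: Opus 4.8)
The plan is to read off the block decomposition directly from the positions of the zeros in the binary expansion of $x$. First I would recall the convention fixed above: the sequence $\pi_2(x)=(w_1,w_2,\dots)\in\Sigma_2$ is the binary expansion of $x$ that does not end in a tail of ones. The decisive structural remark is that every block $b_k$ contains exactly one symbol $0$, located at its final position, while all its remaining symbols (if any) equal $1$. Consequently, in any concatenation $b_{k_1}b_{k_2}b_{k_3}\cdots$ the positions of the $0$'s are precisely the right endpoints of the successive blocks, so the block boundaries are completely determined by the locations of the zeros.

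For existence, I would first argue that the sequence $(w_i)_i$ contains infinitely many zeros. Indeed, a binary expansion with only finitely many zeros ends in a tail of ones, and such sequences are excluded by our choice of $\pi_2$ (a tail of ones would correspond to a dyadic number, for which we always select the representation ending in zeros). Let $0<n_1<n_2<n_3<\cdots$ enumerate the indices with $w_{n_j}=0$. Between consecutive zeros, and before the first zero, the sequence consists only of ones, so setting $k_1=n_1-1$ and $k_j=n_j-n_{j-1}-1$ for $j\ge 2$ we obtain
\[
(w_1,w_2,\dots)=\underbrace{1\cdots 1}_{k_1}0\,\underbrace{1\cdots 1}_{k_2}0\,\underbrace{1\cdots 1}_{k_3}0\cdots=b_{k_1}b_{k_2}b_{k_3}\cdots,
\]
which is the desired decomposition, and it is genuinely infinite precisely because there are infinitely many zeros.

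Uniqueness then follows from the structural remark: in any representation $x=0.b_{\ell_1}b_{\ell_2}\cdots$ the $j$-th block must terminate at the $j$-th zero of $(w_i)_i$, which forces $\ell_j=k_j$ for every $j$. The only point that requires genuine care — and the one I regard as the real content of the statement — is the verification that infinitely many zeros occur, i.e. that no tail of ones appears; once the convention defining $\pi_2$ is invoked this is immediate, and the remainder is routine bookkeeping of the gaps between successive zeros.
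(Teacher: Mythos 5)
Your proof is correct. The paper does not actually prove this lemma --- it states it and passes directly to an illustrative example --- so your argument supplies exactly the routine verification the authors treat as evident: since every block $b_k$ contains a single $0$, at its final position, block boundaries coincide with the zeros of the expansion, and the convention defining $\pi_2$ (no tail of ones) guarantees infinitely many zeros, which is indeed the one point deserving explicit mention.
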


For example, 
\begin{equation*}
0. \underbrace{0}_{k_1=0}\underbrace{0}_{k_2=0}\underbrace{1110}_{k_3=3}\underbrace{10}_{k_4=1} \underbrace{111110}_{k_5=5} \underbrace{10}_{k_6=1} \dots =0.b_0b_0b_3b_1b_5b_1\dots
\end{equation*}

For $x\geq 1$ we define $[x]_2:=2^{[\log_2 x]}$.

\begin{lemma} \label{l21}
If $x=0.b_{k_1}b_{k_2}\dots \in [0,1)$ then
\[
\left[\frac{1}{1-x}\right]_2=2^{k_1}.
\]
\end{lemma}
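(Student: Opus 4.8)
The plan is to translate the statement into a two-sided estimate on $x$ and then simply read off the integer part of the base-$2$ logarithm. By Lemma \ref{lema_bk} the decomposition $x=0.b_{k_1}b_{k_2}\dots$ is unique, and by the definition of the blocks, saying that the first block equals $b_{k_1}$ means precisely that the canonical binary expansion of $x$ (the one not ending with a tail of $1$'s) begins with $k_1$ consecutive $1$'s and has a $0$ in position $k_1+1$. Note that this description reads uniformly for $k_1=0$, where $b_0=0$ forces a leading $0$.

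First I would establish the lower bound. Keeping only the $k_1$ leading $1$'s and truncating the rest to zero gives
\[
x \ge \sum_{i=1}^{k_1}\frac{1}{2^i} = 1-\frac{1}{2^{k_1}},
\]
with the convention that the empty sum ($k_1=0$) is $0$, consistent with $x\ge 0$.

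Next comes the upper bound, which is the only point requiring a little care. Since the $(k_1+1)$-th digit of $x$ is $0$, and since the chosen expansion does not end with a tail of $1$'s, the digits beyond position $k_1+1$ cannot all be $1$; hence $x$ is strictly smaller than the number obtained by filling everything after the $k_1$ leading ones with $1$'s, namely
\[
x < 0.\underbrace{1\dots 1}_{k_1}0\,1111\dots = 1-\frac{1}{2^{k_1+1}}.
\]
Combining the two estimates yields $1-2^{-k_1}\le x<1-2^{-(k_1+1)}$.

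Finally I would invert. The displayed bounds are equivalent to $2^{-(k_1+1)}<1-x\le 2^{-k_1}$, and since $x\in[0,1)$ we have $1-x>0$, so taking reciprocals gives $2^{k_1}\le \frac{1}{1-x}<2^{k_1+1}$. Therefore $\left[\log_2\frac{1}{1-x}\right]=k_1$, and by the definition $[\,\cdot\,]_2=2^{[\log_2(\cdot)]}$ we conclude $\left[\frac{1}{1-x}\right]_2=2^{k_1}$, as claimed. The main (minor) obstacle is justifying the strictness in the upper bound, where one must invoke the convention that the expansion does not terminate in all $1$'s; this both gives the strict inequality and guarantees that the first block is exactly $b_{k_1}$ rather than a longer run of ones.
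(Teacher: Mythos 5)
Your proof is correct and takes essentially the same route as the paper: both arguments reduce the claim to the sandwich $2^{k_1}\le \frac{1}{1-x}<2^{k_1+1}$ and then read off $\left[\frac{1}{1-x}\right]_2=2^{k_1}$, the only difference being that the paper obtains the bounds from the exact identity $1-x=\frac{1}{2^{k_1+1}}(2-r)$ after complementing the binary digits, while you bound $x$ directly from its leading digits. Your handling of the strict upper bound (ruling out a tail of $1$'s) is the same subtlety the paper buries in the condition $r\in[0,1)$, and you justify it correctly.
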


\begin{proof}
For $x\in [0,1)$ consider its binary representation  $x=0.w_1w_2w_3\dots \ $. With the notation $\bar w=1-w$, we obtain that
\[
1-x=0.\bar w_{1}\bar w_2\bar w_3\dots 
\]
Define the blocks 
\[
\bar b_k=\underbrace{0\dots 0}_{k-\textrm{times}}1.
\]
We thus have $1-x=0.\bar b_{k_1}\bar b_{k_2}\bar b_{k_3}\dots $. Setting,  $r=0. b_{k_2} b_{k_3}\dots\in [0,1)$,  we obtain
\begin{eqnarray*}
1-x&=&\frac{1}{2^{k_1+1}}+\frac{1}{2^{k_1+1}}\left(0.\bar b_{k_2}\bar b_{k_3}\dots \right)=\frac{1}{2^{k_1+1}}(2-r).
\end{eqnarray*}
Since $1<2-r\leq 2$, we obtain
\[
2^{k_1}\leq \frac{1}{1-x}<2^{k_1+1}.
\]
\end{proof}
Reminiscent of formula (\ref{Renyi_form}), a formula for $R_2$ in terms of $[\cdot]_2$ can be obtained. Indeed, it follows from Lemmas \ref{lema_bk} and \ref{l21} that:

\begin{lemma}
{{}{For}} $x \in [0,1)$ the linear Renyi map can be written as,
\begin{equation*}
R_2(x)=2-2\frac{\left[\frac{1}{1-x}\right]_2}{\frac{1}{1-x}}.
\end{equation*} 
\end{lemma}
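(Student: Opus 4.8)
The plan is to combine Lemmas \ref{lema_bk} and \ref{l21} with the explicit block description of the jump transformation $R_2$. First I would fix $x = 0.b_{k_1}b_{k_2}b_{k_3}\dots \in [0,1)$ and unwind the definition of $R_2$: the first hitting time $\tau_B$ counts how many applications of the doubling map are needed to land in $[0,1/2)$, which is exactly the step that clears the leading block $b_{k_1} = \underbrace{1\dots1}_{k_1}0$. Indeed, if $k_1 \geq 1$ the expansion begins with $k_1$ ones followed by a zero, so $\tau_B(x) = k_1+1$, and $R_2(x) = B^{k_1+1}(x)$ shifts off the entire first block, giving $R_2(x) = 0.b_{k_2}b_{k_3}\dots$; the case $k_1=0$ (leading digit $0$) is the same statement with $\tau_B(x)=1$. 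This is the content that makes $R_2$ the jump transformation, and it identifies $R_2$ symbolically as the block-shift.

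Next I would convert this symbolic identity into the claimed closed form. Write $r := 0.b_{k_2}b_{k_3}\dots = R_2(x)$. From the proof of Lemma \ref{l21} we already have the key arithmetic relation
\begin{equation*}
1-x = \frac{1}{2^{k_1+1}}(2-r),
\end{equation*}
together with $[\tfrac{1}{1-x}]_2 = 2^{k_1}$. Solving the displayed relation for $r$ gives
\begin{equation*}
r = 2 - 2^{k_1+1}(1-x) = 2 - 2\cdot 2^{k_1}(1-x) = 2 - 2\,\frac{2^{k_1}}{\frac{1}{1-x}},
\end{equation*}
and substituting $2^{k_1} = [\tfrac{1}{1-x}]_2$ yields exactly
\begin{equation*}
R_2(x) = 2 - 2\,\frac{\left[\frac{1}{1-x}\right]_2}{\frac{1}{1-x}}.
\end{equation*}
So the whole statement reduces to the single algebraic manipulation of the identity that was essentially established inside the proof of Lemma \ref{l21}.

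The main obstacle, such as it is, lies not in the algebra but in justifying cleanly that $R_2(x) = r$, i.e.\ that the analytically defined jump transformation really removes precisely one block $b_{k_1}$. I would treat this carefully by splitting into the cases $k_1 = 0$ and $k_1 \geq 1$ and checking that $B^j(x) \in [1/2,1)$ for $0 \le j < k_1$ while $B^{k_1}(x) \in [0,1/2)$, so that the minimum defining $\tau_B$ is attained at $n = k_1$; this is where the unique block decomposition of Lemma \ref{lema_bk} does the real work, guaranteeing a well-defined first block. Once that identification is in hand, the formula is immediate from the relation above, and I would simply note that $x$ is assumed away from the partition boundary so that $\tau_B$ and hence $R_2$ are defined. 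This completes the proof.
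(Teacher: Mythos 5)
Your proof is correct and follows essentially the same route as the paper, which states the lemma without a written proof, noting only that it ``follows from Lemmas \ref{lema_bk} and \ref{l21}''. You have simply made that indication explicit: the block decomposition of Lemma \ref{lema_bk} identifies $\tau_B(x)=k_1+1$ so that $R_2$ acts as the block shift $0.b_{k_1}b_{k_2}\dots \mapsto 0.b_{k_2}b_{k_3}\dots$, and the identity $1-x=\tfrac{1}{2^{k_1+1}}(2-r)$ together with $\left[\tfrac{1}{1-x}\right]_2=2^{k_1}$ from (the proof of) Lemma \ref{l21} gives the closed form by direct algebra, exactly as the authors intend.
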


The map $R_2$ is conjugated to the shift on a countable alphabet.

\begin{lemma} \label{cms}
The map $\sigma:\Sigma_{\geq 0}\to \Sigma_{\geq 0}$ is topologically conjugated to $R_2:[0,1) \setminus \mathbb{D} \to 
[0,1) \setminus \mathbb{D} $.
\end{lemma}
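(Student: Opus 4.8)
The plan is to construct an explicit conjugacy using the block decomposition from Lemma \ref{lema_bk}, which is precisely the combinatorial device that turns the accelerated dynamics of $R_2$ into a shift. First I would define the coding map $\pi: \Sigma_{\geq 0} \to [0,1)\setminus\mathbb{D}$ by sending a sequence $(k_1,k_2,k_3,\dots)$ to the real number whose binary expansion is the infinite concatenation $0.b_{k_1}b_{k_2}b_{k_3}\cdots$. Lemma \ref{lema_bk} guarantees that every $x\in[0,1)$ admits such a block decomposition and that it is unique, so this assignment is well-defined and injective on the appropriate domains. I would then check that the image is exactly $[0,1)\setminus\mathbb{D}$: a dyadic number is precisely one whose binary expansion terminates (tail of zeroes), which forces the block decomposition to be eventually constant equal to $b_0=0$, i.e. a sequence ending in a tail of $0$'s; since $\Sigma_{\geq 0}$ is the full shift space on all of $\N\cup\{0\}$, including such tails, I must be slightly careful, but the non-dyadic points correspond exactly to sequences that are not eventually zero, and conversely. (In fact $\mathbb{D}$ corresponds to the two problematic classes of tails, matching the $\Sigma_2^*$ restriction used earlier for $\pi_2$.)

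Next I would verify the intertwining relation $\pi\circ\sigma = R_2\circ\pi$. This is the heart of the matter and is where the definition of $R_2$ as the jump transformation for the doubling map $B$ pays off. By construction, the first block $b_{k_1}$ has length $k_1+1$ (when $k_1\geq 1$) or length $1$ (when $k_1=0$), and applying $B$ exactly that many times strips off precisely this first block from the binary expansion, since $B$ acts as the shift on binary digits. The hitting-time function $\tau_B$ is designed to count iterates of $B$ until the orbit lands in $[0,1/2)$, which happens exactly when a $0$ appears in the leading position, i.e. at the end of the first block $b_{k_1}$. Thus $R_2(x)=B^{\tau_B(x)}(x)$ deletes the block $b_{k_1}$ and leaves $0.b_{k_2}b_{k_3}\cdots$, which is exactly $\pi(\sigma((k_i)_i))$. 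I would spell this out by matching $\tau_B(x)=k_1+1$ against the block length and invoking that $B$ shifts binary digits.

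Finally I would establish that $\pi$ is a homeomorphism onto its image. Continuity of $\pi$ follows because agreement of two sequences in the first $n$ coordinates forces agreement of many leading binary digits (at least $n$ digits, since each block contributes at least one digit), so the images are close; continuity of $\pi^{-1}$ follows symmetrically, since proximity in $[0,1)$ forces agreement of leading binary digits and hence of leading blocks, the block decomposition being read off deterministically from left to right. Both $\Sigma_{\geq 0}$ (restricted to non-eventually-zero sequences) and $[0,1)\setminus\mathbb{D}$ are metrizable, and $\pi$ is a continuous bijection with continuous inverse, giving the topological conjugacy.

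The main obstacle I expect is the bookkeeping at the boundary between $\Sigma_{\geq 0}$ and $[0,1)\setminus\mathbb{D}$: making sure the correspondence between \emph{which} sequences are excluded (those eventually equal to $0$) and the dyadic points removed from the interval is stated precisely, and confirming that the block decomposition behaves well for sequences with a tail of $0$'s so that the homeomorphism statement holds cleanly on the complement. The shift-invariance computation itself is essentially forced once one observes that $\tau_B$ equals the length of the leading block, so the genuine care is in the domain/codomain matching rather than in any hard estimate.
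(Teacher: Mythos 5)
Your proposal is correct and takes essentially the same route as the paper: the paper's proof simply defines $\eta:\Sigma_2^*\to\Sigma_{\geq 0}$ sending a binary sequence, decomposed via Lemma \ref{lema_bk} as $b_{k_1}b_{k_2}\dots$, to $(k_1,k_2,\dots)$, and declares $\eta\circ\pi_2$ to be the conjugacy --- which is exactly the inverse of your coding map $\pi$. You merely spell out what the paper leaves implicit (the intertwining via $\tau_B(x)=k_1+1$, i.e.\ the hitting time equals the leading block length, and the two-sided continuity), and your care about the domain is warranted: strictly speaking $\eta$ is a homeomorphism only onto the sequences in $\Sigma_{\geq 0}$ that are not eventually zero, those being precisely the images of the non-dyadic points, a bookkeeping point the paper glosses over.
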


\begin{proof}
Note that the map $\eta: \Sigma_2^* \to \Sigma_{\geq 0}$ defined by 
\begin{equation*}
\eta((w_1,w_2, \dots))= \eta (b_{k_1}, b_{k_2}, \dots)= (k_1, \ k_2,  \ \dots),
\end{equation*}
is a homeomorphism. The map $\eta \circ \pi_2$ is the required conjugacy.
\end{proof}

Both the Renyi map and the linear Renyi map are topologically conjugated to the full shift on a countable alphabet. Therefore,

\begin{proposition} \label{con}
There exists {{}{a}} homeomorphism $\phi: [0,1) \setminus \mathbb{D} \to [0,1) \setminus \Q$ such that
\begin{equation*}
\phi \circ R_2= R \circ \phi.
\end{equation*}
\end{proposition}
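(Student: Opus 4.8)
The plan is to realize $\phi$ as a composition through the two symbolic models already constructed, exploiting the fact that a full shift on a countably infinite alphabet is, up to relabeling of the symbols, independent of the names of those symbols. First I would record the two conjugacies at hand. By Lemma \ref{cms} the homeomorphism $h:=\eta\circ\pi_2\colon [0,1)\setminus\mathbb{D}\to\Sigma_{\geq 0}$ satisfies $h\circ R_2=\sigma\circ h$, while by Lemma \ref{code} the coding map $\pi_R$ satisfies $R\circ\pi_R=\pi_R\circ\sigma$ and restricts to a homeomorphism $\pi_R\colon\Sigma_{\geq 2}\setminus\Sigma_{\bar 2}\to[0,1)\setminus\Q$. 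It therefore suffices to bridge the two shift spaces $\Sigma_{\geq 0}$ and $\Sigma_{\geq 2}$ by a shift-commuting homeomorphism.

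To this end I would introduce the coordinate-wise relabeling $s\colon\Sigma_{\geq 0}\to\Sigma_{\geq 2}$ induced by the bijection $k\mapsto k+2$ of the alphabets, that is $s((w_i)_i)=(w_i+2)_i$. Being the product of a fixed bijection of the (discrete) alphabet, $s$ is a homeomorphism for the product topologies, and it manifestly commutes with the shift, $s\circ\sigma=\sigma\circ s$. The candidate conjugacy is then
\begin{equation*}
\phi:=\pi_R\circ s\circ h.
\end{equation*}

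Before concluding I would check that the exceptional sets line up, so that this composition is a well-defined homeomorphism \emph{onto} $[0,1)\setminus\Q$. The key observation is that $k\mapsto k+2$ sends the symbol $0$ to $2$, hence $s$ carries the eventually-zero sequences of $\Sigma_{\geq 0}$ exactly onto the tail-of-$2$ sequences $\Sigma_{\bar 2}$. Unwinding the block coding $\eta$, the code $h(x)=(k_1,k_2,\dots)$ is eventually $0$ precisely when the binary expansion of $x$ ends in a tail of zeros, i.e. precisely when $x\in\mathbb{D}$; thus, as $x$ ranges over $[0,1)\setminus\mathbb{D}$, the image of $h$ is exactly the set of sequences in $\Sigma_{\geq 0}$ that are \emph{not} eventually $0$. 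Consequently $s\circ h$ is a homeomorphism onto $\Sigma_{\geq 2}\setminus\Sigma_{\bar 2}$, the domain on which $\pi_R$ is a homeomorphism onto $[0,1)\setminus\Q$, so $\phi$ is a composition of homeomorphisms between the asserted domains.

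Finally, the conjugacy relation is a formal chain:
\begin{equation*}
\phi\circ R_2=\pi_R\circ s\circ h\circ R_2=\pi_R\circ s\circ\sigma\circ h=\pi_R\circ\sigma\circ s\circ h=R\circ\pi_R\circ s\circ h=R\circ\phi,
\end{equation*}
using successively $h\circ R_2=\sigma\circ h$, then $s\circ\sigma=\sigma\circ s$, and finally $R\circ\pi_R=\pi_R\circ\sigma$. The only genuinely non-formal step is the bookkeeping of the exceptional sets in the previous paragraph: one must verify that the alphabet bijection matches the removed non-dyadic data correctly, sending the tail-of-$0$ sequences (the dyadics absent on the $R_2$ side) exactly to the tail-of-$2$ sequences $\Sigma_{\bar 2}$ (the rationals absent on the $R$ side). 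Everything else reduces to composing the conjugacies supplied by Lemmas \ref{code} and \ref{cms} with the trivially shift-equivariant relabeling $s$.
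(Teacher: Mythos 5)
Your proof is correct and takes essentially the same route as the paper: the paper's own proof introduces exactly your relabeling map $s$ (calling it $h$) and sets $\phi=\pi_R\circ h\circ\eta\circ\pi_2$, invoking Lemmas \ref{code} and \ref{cms} just as you do. Your explicit verification that the exceptional sets line up---the eventually-zero block codes (coming from the dyadics) being carried by the relabeling exactly onto the tail-of-$2$ sequences $\Sigma_{\bar 2}$ (the rationals)---is the one piece of bookkeeping the paper leaves implicit, and you carry it out correctly.
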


 \begin{proof}
 Consider the homeomorphism 
 $h:\Sigma_{\geq 0}  \to \Sigma_{\geq 2}$ defined by 
 \begin{equation*}
 h(a_1, a_2, a_3, \dots)=(a_1+2,a_2+2, a_3 +2, \dots ).
 \end{equation*} 
 By virtue of Lemmas \ref{code}  and \ref{cms}, the map $\phi= \pi_R \circ h \circ \eta \circ \pi_2$ satisfies the required properties. 
 \end{proof}

\subsection{The odometric action}

We conclude this section {{}{by}} describing the action of the dyadic odometer on the block codes of the accelerated system studied above.  Let $ \tilde{D}=\eta\circ D \circ \eta^{-1}$.
 
 \begin{lemma}  \label{key}
 The following holds
 \begin{equation}
\tilde{D}(k_1, k_2,  \dots)=(\underbrace{0,\dots,0,}_{k_1\textrm{-times}}(k_2+1), \dots).
 \end{equation}
  \end{lemma}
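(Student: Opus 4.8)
The plan is to unwind the definitions of $\tilde{D}$, $\eta$, and the block decomposition, and then read off the effect of the odometer $D$ on binary strings directly at the level of the blocks $b_k$. Recall that $\tilde{D}=\eta\circ D\circ\eta^{-1}$, so to compute $\tilde{D}(k_1,k_2,\dots)$ I first apply $\eta^{-1}$ to recover the binary sequence whose block decomposition is $(b_{k_1},b_{k_2},\dots)$, then apply the odometer $D$, and finally re-encode via $\eta$ into block-index coordinates. The key observation is that the block structure is exactly aligned with the carrying mechanism of the odometer: the block $b_{k_1}$ consists (for $k_1\geq 1$) of a run of $k_1$ ones followed by a single zero, and $b_0$ is a single zero.

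First I would handle the generic case $k_1\geq 1$. Here $\eta^{-1}(k_1,k_2,\dots)$ begins with $\underbrace{1\dots1}_{k_1}0$, i.e.\ the binary string starts with $k_1$ ones and then a zero at position $k_1+1$. Applying $D$, which by definition turns a maximal initial run of ones followed by a zero, $(\underbrace{1,\dots,1}_{k_1},0,\dots)$, into $(\underbrace{0,\dots,0}_{k_1},1,\dots)$ while leaving the remaining entries untouched, I get a string beginning with $k_1$ zeros and then a $1$ in position $k_1+1$. The crucial point is to recognize the new block decomposition: the first $k_1$ zeros are precisely $k_1$ copies of the block $b_0$, and the $1$ in position $k_1+1$ now merges with the first $k_2$ ones of the old block $b_{k_2}$ (which followed the first block's terminating zero) to produce a run of $k_2+1$ ones terminated by the zero that ended $b_{k_2}$. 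Hence the new decomposition is $b_0$ repeated $k_1$ times, then $b_{k_2+1}$, then $b_{k_3}, b_{k_4},\dots$ unchanged, which re-encodes under $\eta$ as $(\underbrace{0,\dots,0}_{k_1},k_2+1,k_3,\dots)$, exactly the claimed formula.

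For the boundary case $k_1=0$, the first block is just $b_0=0$, so $\eta^{-1}(0,k_2,\dots)$ starts with a zero in position $1$. The odometer acts on an initial $0$ simply by flipping it to $1$ (the case $k=0$ in the definition of $D$), leaving everything else fixed. That leading $1$ then merges with the following run of $k_2$ ones from $b_{k_2}$ to form $b_{k_2+1}$, and the tail is unchanged, giving $(k_2+1,k_3,\dots)$, which is the formula with the empty run of zeros when $k_1=0$. I would also note that on the domain in question (sequences in $\Sigma_2^*$, corresponding to irrationals / non-dyadic points), there is no all-ones tail, so the exceptional clause $D(1,1,\dots)=(0,0,\dots)$ never intervenes and every point has a well-defined first zero.

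I expect the only genuinely delicate step to be the bookkeeping of how the carried $1$ interacts with the \emph{next} block $b_{k_2}$: one must verify that after $D$ turns position $k_1+1$ from $0$ to $1$, this new $1$ sits immediately before the $k_2$ ones of $b_{k_2}$ and thereby lengthens that run by exactly one, producing $b_{k_2+1}$ rather than altering the position of any later block. This is a matter of checking index alignment and confirming that all blocks $b_{k_3},b_{k_4},\dots$ are shifted but otherwise preserved; once that alignment is made explicit, the identity follows immediately from the definition of $\eta$ in Lemma \ref{cms}.
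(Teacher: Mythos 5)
Your proof is correct and follows essentially the same route as the paper's: compute $\eta^{-1}(k_1,k_2,\dots)=(\underbrace{1,\dots,1}_{k_1},0,b_{k_2},\dots)$, apply $D$ to flip the initial run, and conclude via the merging identity $1,b_k=b_{k+1}$. Your explicit treatment of the $k_1=0$ case and of the excluded all-ones tail just spells out details the paper leaves implicit.
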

\begin{proof}
We have $\eta^{-1}(k_1, k_2, \dots)=(\underbrace{1,\dots, 1,}_{k_1\textrm{-times}}0, b_{k_2}, \dots)$, and thus 

\[
D\left(\underbrace{1,\dots, 1,}_{k_1\textrm{-times}}0, b_{k_2},\dots \right)=\left( \underbrace{0,\dots, 0}_{k_1\textrm{-times}},1,b_{k_2},\dots\right).
\]
The conclusion follows from the fact that $1, b_k=b_{k+1}$.
\end{proof}

\section{The map $D_2$ is a linear map in the conjugacy class of $T$.} \label{sec:conj}

As promised, in our next result we prove that the odometer $D_2$ is topologically conjugated to the map $T$.
Recall that  $\mathbb{D}$ denotes  the set of dyadic numbers in $[0,1)$.

\begin{theorem} \label{main}
The {{}{homeomorphism}} $\phi:[0,1) \setminus \mathbb{D} \to [0,1) \setminus \Q$, defined by $\phi = \pi_R \circ h \circ \eta\circ \pi_2$, is such  that 
\begin{equation}
\phi \circ D_2= T \circ \phi.
\end{equation}
\end{theorem}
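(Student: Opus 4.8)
The plan is to reduce the pointwise identity $\phi\circ D_2 = T\circ\phi$ to a single symbolic commutation on the block-code space $\Sigma_{\geq 0}$, which then collapses to a one-line computation. First I would repackage Theorem \ref{action} as a statement about a shift-type map: define $\hat{T}:\Sigma_{\geq 2}\to\Sigma_{\geq 2}$ by
\[
\hat{T}(a_1,a_2,a_3,\dots)=(\underbrace{2,\dots,2}_{(a_1-2)\textrm{-times}},a_2+1,a_3,a_4,\dots).
\]
Theorem \ref{action}, together with Lemma \ref{code} (which makes $\pi_R$ a homeomorphism on $\Sigma_{\geq 2}\setminus\Sigma_{\bar 2}$ and intertwines $\sigma$ with $R$), says precisely that $\pi_R\circ\hat{T}=T\circ\pi_R$ on $\Sigma_{\geq 2}\setminus\Sigma_{\bar 2}$. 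In other words, $\hat{T}$ is the coding of $T$ through backward continued fractions.

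Next I would run the diagram chase, using $\phi=\pi_R\circ h\circ\eta\circ\pi_2$. On the left, Proposition \ref{con_d} gives $\pi_2\circ D_2=D\circ\pi_2$, and the definition $\tilde{D}=\eta\circ D\circ\eta^{-1}$ gives $\eta\circ D=\tilde{D}\circ\eta$, so
\[
\phi\circ D_2=\pi_R\circ h\circ\eta\circ\pi_2\circ D_2=\pi_R\circ h\circ\tilde{D}\circ\eta\circ\pi_2.
\]
On the right, the repackaged Theorem \ref{action} gives $T\circ\phi=\pi_R\circ\hat{T}\circ h\circ\eta\circ\pi_2$. Since $\pi_2$, $\eta$, and $\pi_R$ are bijections on the relevant sets, the theorem reduces to the purely symbolic identity $h\circ\tilde{D}=\hat{T}\circ h$ on $\Sigma_{\geq 0}$.

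The core verification is then immediate. By Lemma \ref{key}, $\tilde{D}(k_1,k_2,\dots)=(\underbrace{0,\dots,0}_{k_1\textrm{-times}},k_2+1,k_3,k_4,\dots)$, and applying $h$ (which adds $2$ to every entry) yields $(\underbrace{2,\dots,2}_{k_1\textrm{-times}},k_2+3,k_3+2,k_4+2,\dots)$. Conversely, $h(k_1,k_2,\dots)=(k_1+2,k_2+2,\dots)$, and feeding this to $\hat{T}$ with $a_1=k_1+2$ prepends $a_1-2=k_1$ copies of $2$, replaces $a_2=k_2+2$ by $a_2+1=k_2+3$, and keeps $a_j=k_j+2$ for $j\geq 3$; this is the identical sequence. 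Hence $h\circ\tilde{D}=\hat{T}\circ h$, closing the argument.

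The computation itself is trivial, so the only real obstacle is the domain bookkeeping that makes every intertwining relation legitimate where it is invoked. I would track that $\pi_2$ carries $[0,1)\setminus\mathbb{D}$ onto $\Sigma_2^*$ (no tail of $0$'s or $1$'s), that $\eta$ sends these block codes into sequences of $\Sigma_{\geq 0}$ not terminating in a tail of $0$'s, and that $h$ maps those into $\Sigma_{\geq 2}\setminus\Sigma_{\bar 2}$, exactly the set on which $\pi_R$ is a homeomorphism and the repackaged Theorem \ref{action} holds; since $\tilde{D}$ and $\hat{T}$ modify only the front of a sequence, this tail condition is preserved, so the final cancellation of $\pi_R$ is valid. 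I would also record the boundary case $k_1=0$, where Lemma \ref{key} gives $\tilde{D}(0,k_2,\dots)=(k_2+1,k_3,\dots)$ and $\hat{T}$ prepends no $2$'s because $a_1-2=0$, so $h\circ\tilde{D}=\hat{T}\circ h$ continues to hold verbatim.
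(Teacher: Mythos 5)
Your proof is correct and is essentially the paper's own argument: your $\hat{T}$ is exactly the paper's odometric substitution $O$, and your reduction via $\pi_2\circ D_2=D\circ\pi_2$, Lemma \ref{key}, and Theorem \ref{action} to the symbolic identity $h\circ\tilde{D}=\hat{T}\circ h$ is precisely the paper's proof of Theorem \ref{main}. The only difference is that you verify the commutation $h\circ\tilde{D}=\hat{T}\circ h$ and the domain/tail bookkeeping explicitly, both of which the paper merely asserts.
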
  
 
 \begin{proof}
 Let $O: \Sigma_{\geq 2} \to \Sigma_{\geq 2}$ be the \emph{odometric substitution} defined by
 \begin{equation*}
O(a_1, a_2, a_3, \dots)= (\underbrace{2,2,\dots,2}_{(a_1-2)\textrm{-times}}, a_2+ 1, a_3, a_4, \dots).
\end{equation*} 
We have that $O \circ h =h \circ \tilde{D}$. It then follows from Lemma \ref{key} and Theorem  \ref{action} that 
the {{}{homeomorphism}} $\phi= \pi_R \circ h \circ \eta\circ \pi_2$ has the desired properties.
\end{proof}

\begin{figure} 
  \centering
  \includegraphics[width=6cm]{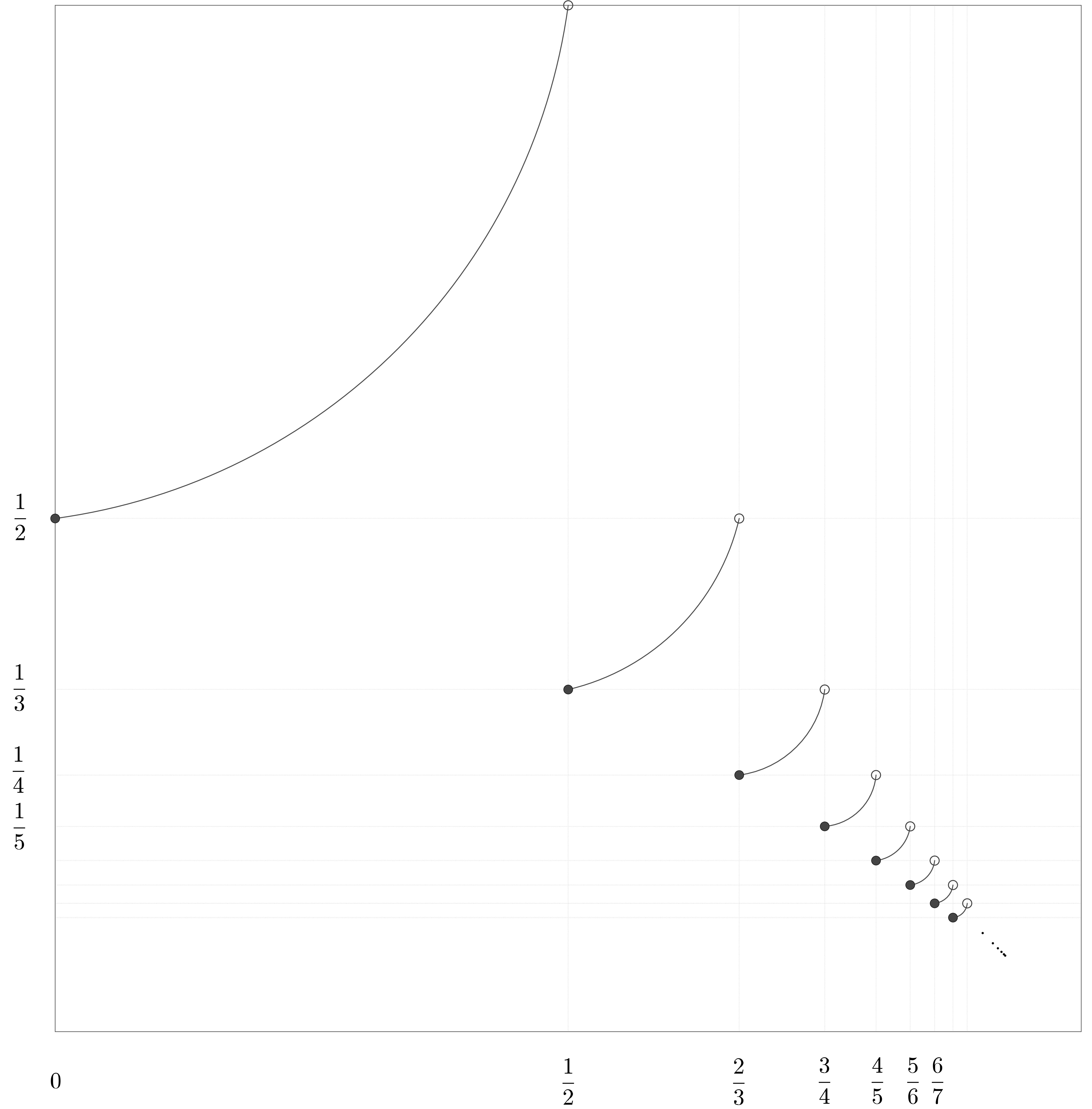}
 \includegraphics[width=6cm]{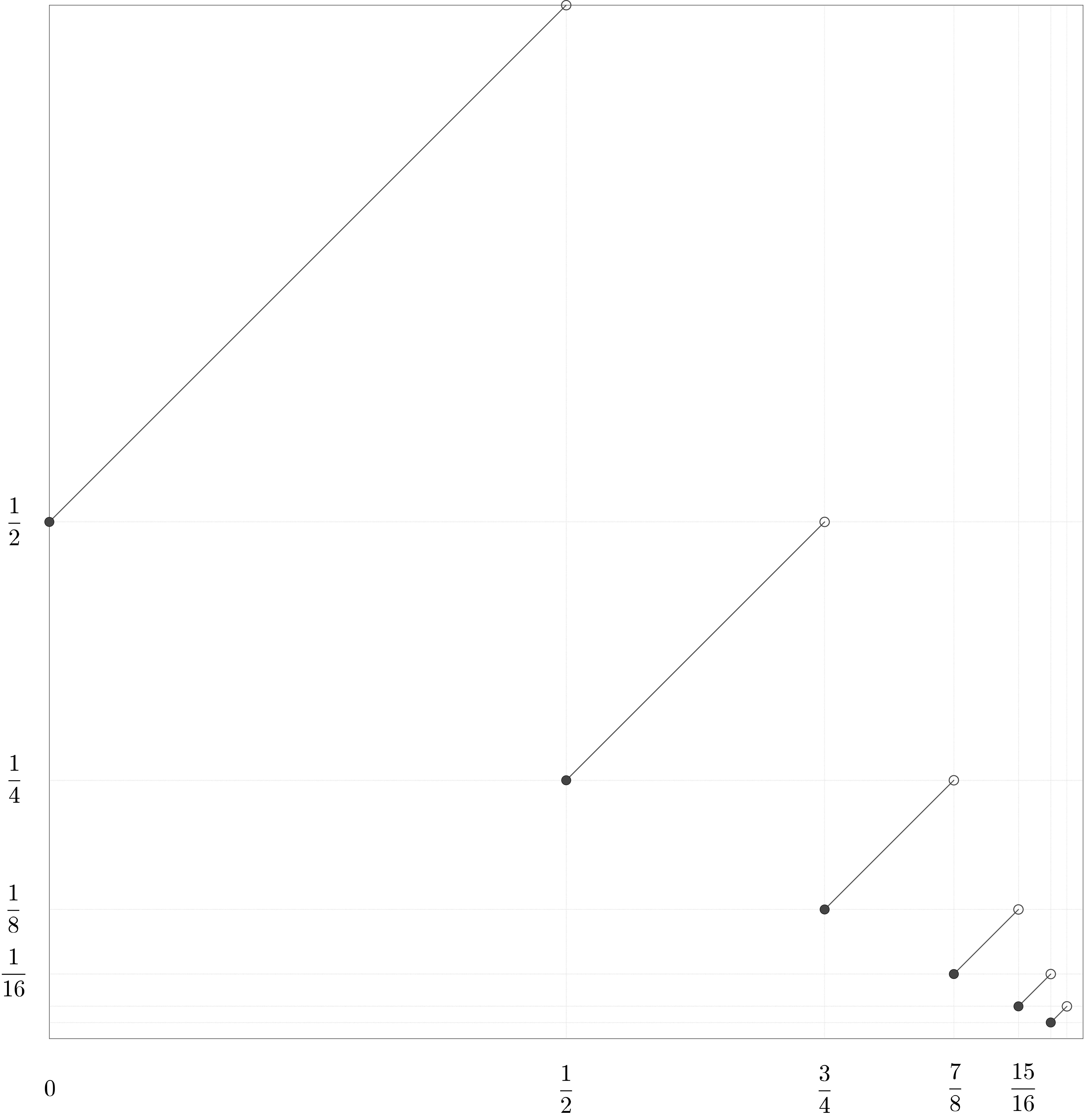}
  \caption{The map $T$ (left) and the realization of the dyadic odometer (right).}\label{fig}
  \end{figure}

%
%

\begin{remark} \label{orbit_rationals}
The map $\phi= \pi_R \circ h \circ \eta\circ \pi_2: \mathbb{D} \to [0,1] \cap \Q$ is a bijection satisfying $\phi \circ D_2= T \circ \phi$. 
\end{remark}

Theorem \ref{action} together with Remark \ref{orbit_rationals}  provides a new proof of the result by Calkin and Wilf \cite{cw} and Newman \cite{k} on the construction of explicit bijections between the natural numbers and the rationals in $[0,1)$. Indeed,

\begin{corollary}[Calkin and Wilf, Newman] \label{rationals}
The map $n \mapsto T^n(0)$ is a  bijection between $\N$ and $\Q \cap(0,1)$. Consequently, the map $n\mapsto F^n(0)$ is a  bijection between $\N$ and $\Q \cap \mathbb{R}^+$.
\end{corollary}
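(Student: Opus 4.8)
The plan is to transport the well-understood counting property of the dyadic odometer through the conjugacy already established in Theorem \ref{main} and Remark \ref{orbit_rationals}. The two facts I would lean on are: (i) by Remark \ref{orbita_diadica}, which rests on the combinatorial statement Proposition \ref{erg_odo}\eqref{1}, the map $n\mapsto D_2^n(0)$ is a bijection from $\N\cup\{0\}$ onto the set $\mathbb{D}$ of dyadic numbers in $[0,1)$, visiting each dyadic exactly once; and (ii) by Remark \ref{orbit_rationals}, $\phi=\pi_R\circ h\circ\eta\circ\pi_2$ restricts to a bijection $\phi:\mathbb{D}\to\Q\cap[0,1)$ satisfying $\phi\circ D_2=T\circ\phi$. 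Granting these, counting rationals becomes a formal consequence of counting dyadics.

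First I would check that the two orbits share a base point, i.e.\ that $\phi(0)=0$. This is a short chase through the four maps defining $\phi$: the binary expansion gives $\pi_2(0)=(0,0,\dots)$, whose block decomposition is $b_0b_0\cdots$, so $\eta(\pi_2(0))=(0,0,\dots)$; then $h$ shifts every entry by $2$, giving $(2,2,\dots)$, and finally $\pi_R(2,2,\dots)=[2,2,\dots]=0$, since the purely periodic backward continued fraction with all partial quotients equal to $2$ evaluates to $0$. With this in hand, the conjugacy relation $\phi\circ D_2=T\circ\phi$ and a trivial induction give
\begin{equation*}
T^n(0)=T^n(\phi(0))=\phi(D_2^n(0))\qquad(n\ge 0).
\end{equation*}
Hence $n\mapsto T^n(0)$ is the composition of the bijection $n\mapsto D_2^n(0)$ of (i) with the bijection $\phi$ of (ii), and is therefore itself a bijection from $\N\cup\{0\}$ onto $\phi(\mathbb{D})=\Q\cap[0,1)$. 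Discarding the index $n=0$, which maps to the base point $0=\phi(0)$, and noting that $\phi(\mathbb{D}\setminus\{0\})=\Q\cap(0,1)$, yields the asserted bijection between $\N$ and $\Q\cap(0,1)$.

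For the ``consequently'' clause I would pass from $T$ back to $F$ using the two structural facts recorded in the remark after the definition of $T$: that $T=F^2$, and that $F$ interchanges rationals, $F(\Q\cap[0,1))=\Q\cap[1,\infty)$ and $F(\Q\cap[1,\infty))=\Q\cap(0,1)$. I would split the $F$-orbit of $0$ according to the parity of the iterate. The even iterates $F^{2k}(0)=T^k(0)$ run bijectively over $\Q\cap[0,1)$ as $k\ge 0$, while the odd iterates $F^{2k+1}(0)=F(T^k(0))$ run over $F(\Q\cap[0,1))=\Q\cap[1,\infty)$, bijectively because $F$ is a bijection between these two sets. Since $(0,1)$ and $[1,\infty)$ are disjoint and, together with $\{0\}$, exhaust $\Q\cap[0,\infty)$, removing once more the single index producing $F^0(0)=0$ shows that $n\mapsto F^n(0)$ is a bijection from $\N$ onto $\Q\cap\R^+$.

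The substantive dynamical content is entirely front-loaded into Theorem \ref{main}/Remark \ref{orbit_rationals} and into the minimality statement Proposition \ref{erg_odo}\eqref{1}; what remains is bookkeeping. Accordingly, the only places I expect to need care — rather than genuine difficulty — are the verification $\phi(0)=0$ and the matching of endpoints and index ranges: one must keep track of whether $0$ and the accumulation value $1$ lie in the sets in play (in particular whether $\phi(\mathbb{D})$ is written as $\Q\cap[0,1)$ or $\Q\cap[0,1]$), confirm via Proposition \ref{erg_odo}\eqref{1} that the odometer orbit visits each dyadic \emph{exactly once} and not merely densely, and ensure the even/odd splitting for $F$ partitions $\Q^+$ without overlap. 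None of these is deep, but each must be stated precisely so that ``bijection'' is not quietly weakened to ``surjection''.
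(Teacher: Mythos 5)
Your proof is correct and takes essentially the same approach as the paper: both transport the exact once-per-dyadic counting of the odometer orbit (Remark \ref{orbita_diadica}) through the conjugating bijection $\phi$ of Remark \ref{orbit_rationals}. The only difference is that you spell out details the paper leaves implicit --- the base-point verification $\phi(0)=0$ via $[2,2,\dots]=0$, and the even/odd splitting of the $F$-orbit using $T=F^2$ --- which is careful bookkeeping rather than a different argument.
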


\begin{proof}
As pointed out in Remark \ref{orbita_diadica}, the orbit of $x=0$ with respect to $D_2$ induces a bijection of the natural numbers and the dyadic numbers. In view of Remark \ref{orbit_rationals} we have that {{}{the}} orbit of $x=0$ with respect to $T$ induces a bijection  between $\N$ and $\Q \cap (0,1)$. Indeed, this bijection is given by $n \mapsto T^n(0)$.
\end{proof}

%

We now recover results by Bonanno and Isola \cite{bi} regarding dynamical properties for the map $T$.

\begin{corollary}[Bonanno and Isola] \label{ergodic:T}
The map $T$ has zero entropy, it is minimal and uniquely ergodic. Moreover, its unique invariant measure is not absolutely continuous with respect to the Lebesgue measure.
\end{corollary}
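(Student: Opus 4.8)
The plan is to deduce all four assertions by transporting the properties of the dyadic odometer $D_2$, collected in Proposition~\ref{erg_odometro}, through the conjugacy $\phi$ of Theorem~\ref{main}. The only genuinely new inputs are the identification of the invariant measure of $T$ and the argument for its singularity; everything else is a transfer, the sole subtlety being that $\phi$ is a homeomorphism merely between the conull dense sets $[0,1)\setminus\mathbb{D}$ and $[0,1)\setminus\Q$, so the countably many dyadic (resp.\ rational) points must be handled by hand.

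\emph{Entropy and minimality.} Zero topological entropy is a conjugacy invariant and is inherited from $D_2$ (equivalently from the symbolic odometer $D$ of Proposition~\ref{erg_odo}). For minimality I would argue orbitwise. If $x$ is irrational, set $y=\phi^{-1}(x)\in[0,1)\setminus\mathbb{D}$; since $D_2$ preserves $[0,1)\setminus\mathbb{D}$ and its orbit $O_{D_2}(y)$ is dense in $[0,1)$, that orbit is dense in the subspace $[0,1)\setminus\mathbb{D}$, and applying the homeomorphism $\phi$ shows $O_T(x)=\phi(O_{D_2}(y))$ is dense in $[0,1)\setminus\Q$, hence in $[0,1)$. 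If $x$ is rational, Corollary~\ref{rationals} gives $x=T^n(0)$ for some $n\ge 0$, so $O_T(x)=\{T^k(0):k\ge n\}$ is a cofinite subset of $\Q\cap[0,1)$ and is therefore dense. Thus every orbit is dense.

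\emph{Unique ergodicity.} The measure $\mu_T:=\phi_{\ast}\lambda$, where $\lambda$ denotes Lebesgue measure, is $T$-invariant: for Borel $A$, using $\phi^{-1}\circ T^{-1}=D_2^{-1}\circ\phi^{-1}$ on the relevant conull sets together with the $D_2$-invariance of $\lambda$, one finds $\mu_T(T^{-1}A)=\lambda(D_2^{-1}\phi^{-1}A)=\lambda(\phi^{-1}A)=\mu_T(A)$. Conversely, let $\nu$ be any $T$-invariant Borel probability measure. Since $T$ is a bijection of $[0,1)$ (being conjugate, via the bijection $\phi$ furnished by Theorem~\ref{main} and Remark~\ref{orbit_rationals}, to the bijection $D_2$) and is minimal on an infinite set, it has no periodic points, so $\nu$ is atomless and $\nu(\Q)=0$. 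Hence $\phi^{-1}_{\ast}\nu$ is a $D_2$-invariant Borel probability measure, which by unique ergodicity of $D_2$ equals $\lambda$; therefore $\nu=\mu_T$, and $T$ is uniquely ergodic.

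\emph{Non absolute continuity.} This is the decisive step. Tracking $\lambda$ through $\phi=\pi_R\circ h\circ\eta\circ\pi_2$: the map $\pi_2$ sends $\lambda$ to the $(1/2,1/2)$-Bernoulli measure, $\eta$ recodes the maximal blocks $b_k$ so that the symbols $k_i$ become independent with value $j$ occurring with probability $2^{-(j+1)}$, and $h$ shifts every symbol by $2$; hence $\mu_T=\pi_{R\ast}\beta$, where $\beta$ is the Bernoulli measure on $\Sigma_{\geq 2}$ under which the digits $a_i$ are independent, the value $m\ge 2$ occurring with probability $2^{-(m-1)}$. Because $\beta$ gives full measure to $\Sigma_{\geq 2}\setminus\Sigma_{\bar 2}$ and is $\sigma$-invariant, Lemma~\ref{code} yields $R_{\ast}\mu_T=\pi_{R\ast}\sigma_{\ast}\beta=\pi_{R\ast}\beta=\mu_T$, so $\mu_T$ is a finite $R$-invariant measure. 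As recalled above, the Renyi map $R$ admits no finite invariant measure absolutely continuous with respect to $\lambda$; consequently $\mu_T$ is not absolutely continuous, which finishes the proof. The main obstacle lies precisely here: unlike the soft transfers of entropy, minimality and unique ergodicity, ruling out absolute continuity requires recognizing that the single measure $\mu_T$ is simultaneously invariant for $T$ and for $R$, and then invoking the non-existence of a finite absolutely continuous invariant measure for $R$.
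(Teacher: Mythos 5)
Your proof is correct, and its skeleton coincides with the paper's: transfer entropy, minimality and unique ergodicity from $D_2$ through $\phi$ (Theorem \ref{main}, Propositions \ref{erg_odo} and \ref{erg_odometro}), handle the rational orbit via Corollary \ref{rationals}, identify the unique $T$-invariant measure as $\mu_T=\phi_*\lambda$, and rule out absolute continuity by showing that this same measure is $R$-invariant while $R$ admits no finite absolutely continuous invariant measure. The one place where you take a genuinely different route is the mechanism for establishing $R$-invariance of $\mu_T$. The paper argues softly: Lebesgue measure is invariant for the full-branched piecewise-linear map $R_2$, and the \emph{same} homeomorphism $\phi$ that conjugates $D_2$ with $T$ also conjugates $R_2$ with $R$ (Proposition \ref{con}), so $\phi_*\lambda$ is automatically $R$-invariant. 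You instead push $\lambda$ through the factors of $\phi=\pi_R\circ h\circ\eta\circ\pi_2$ by hand, obtaining $\mu_T=\pi_{R*}\beta$ for the Bernoulli measure $\beta$ on $\Sigma_{\geq 2}$ with digit distribution $\beta(a_i=m)=2^{-(m-1)}$, $m\geq 2$ (your block computation $P(k=j)=2^{-(j+1)}$ is right, and $\beta(\Sigma_{\bar 2})=0$ as you note), and then deduce $R$-invariance from Lemma \ref{code} and shift-invariance of $\beta$. The paper's version is shorter and reuses an already-proved conjugacy; yours buys an explicit description of the unique $T$-invariant measure --- under $\mu_T$ the backward continued fraction digits are i.i.d.\ with $P(a=m)=2^{-(m-1)}$, i.e.\ the Minkowski question-mark measure --- and in effect re-proves concretely the consequence of Proposition \ref{con} that the paper cites. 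You also supply bookkeeping the paper leaves implicit and which is genuinely needed for a self-contained argument: density of rational orbits via Corollary \ref{rationals}, atomlessness of any $T$-invariant measure (no periodic points), and $\nu(\Q)=0$ so that the conjugacy, which is only a homeomorphism between $[0,1)\setminus\mathbb{D}$ and $[0,1)\setminus\Q$, can legitimately be applied. (Both you and the paper treat the entropy transfer as automatic even though $\phi$ conjugates only on these conull sets; that is a shared, standard elision, not a gap specific to your write-up.)
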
 

\begin{proof}
The fact that $T$ has zero entropy is minimal and uniquely ergodic when restricted to $[0,1)\setminus \Q$ is direct from Theorem \ref{main} and the properties of the dyadic odometer, Proposition \ref{erg_odo}. Moreover, as established in Corollary \ref{rationals}, the orbit of $x=0$ induces a bijection between $\N$ and $\Q \cap (0,1)$. In particular, the orbit of any rational number by $T$ is dense and no invariant measure is supported on it. Hence, $T$  is minimal, uniquely ergodic and has zero topological entropy.

Recall now that the Lebesgue measure on $[0,1)$, that we denote by $\text{Leb}$, is invariant for the linear Renyi map $R_2$ (being full branched and piecewise linear). Moreover, since the systems $R_2$ and the Renyi map $R$ are topologically conjugated by $\phi$ (see 
Proposition \ref{con}), the push-forward of $\text{Leb}$ with $\phi$ is an invariant probability measure for $R$. Since $R$ does not have  invariant probability measures absolutely continuous with respect to the Lebesgue measure, we have that this push-forwarded measure is not absolutely continuous with respect to the Lebesgue measure. 

Note now that the unique invariant probability for $T$ is the push-forward by the same map $\phi$ of the unique invariant probability for $D_2$. The result now follows since $\text{Leb}$ is the unique invariant measure for $D_2$.
\end{proof}

\begin{remark}
 As interval exchange transformations both the map $T$ and the dyadic odometer share the same permutation but have different partitions. We stress that even for finite non{{}{-}}linear interval exchange transformations it is a very difficult problem to find a linear interval exchange transformation in its conjugacy class. Work in this direction has been an active area of research, see for example \cite{mmy}. This problem is, of course,  motivated by the corresponding question for circle homeomorphism and  rotations.
 \end{remark}

\section{Minkowski's Question Mark function.}

Continuous, non-decreasing functions mapping the unit interval onto itself and yet singular, that is, having derivative equal to zero Lebesgue almost everywhere, were constructed already in 1884 by Georg Cantor \cite{c}.  In 1904, Hermann Minkowski \cite{m} introduced a function of the same type but with the remarkable property of being  \emph{strictly increasing}. The function $?:[0,1] \to [0,1]$, called \emph{Question Mark function} can be defined inductively on the rational numbers in $[0,1]$  by $?(0)=0$, $?(1)=1$ and
\begin{equation*}
? \left( \frac{p+p'}{q+q'}	\right)= \frac{?(p/q)+ ?(p'/q')}{2}, 
\end{equation*}
whenever $p/q$ and $p'/q'$ are rational numbers in lowest terms with $p'q-pq'=1$. The definition for irrationals in $[0,1]$ follows by continuity. Salem \cite{s}  proved that the question mark function is actually H\"older and conjectured that its Fourier-Stieltjes  coefficients decay to zero at infinity with a polynomial rate. This was recently established by Thomas Jordan and Tuomas Sahlsten \cite{js}. Denjoy \cite{d1,d2}, who actually proved the singularity of the function $?$, also obtained an explicit formula for it. In order to explain this, we briefly recall the definition and basic properties of the classical  \emph{continued fraction} expansion. Every real number $ x \in (0,1)$  can be written as a continued fraction of the form
\begin{equation*}
x = \textrm{ } \cfrac{1}{a_1 + \cfrac{1}{a_2 + \cfrac{1}{a_3 + \dots}}} = \textrm{ } [a_1, a_2, a_3, \dots]_C,
\end{equation*}
where $a_i \in \mathbb{N}$. If $x \in (0,1)$ is irrational then the continued fraction expansion is unique and it has infinitely many terms \cite[Theorem 170]{hw}. Rational numbers have two different expansions, both of them finite and one of them having last digit equal to $1$. As is the case of the backward continued fraction, there exits a dynamical system closely related to the continued fraction expansion. The \emph{Gauss} map  $G :(0,1] \to (0,1]$, is the interval map defined by 
\begin{equation*}
G(x)= \frac{1}{x} -\left[ \frac{1}{x} \right].
\end{equation*}
The Gauss map acts as the shift map on the continued fraction expansion,
\begin{equation*}
 a_n = \left[\frac{1}{G^{n-1}(x)} \right].
\end{equation*} 
We refer to  Khinchine's  well known 1935 monograph on continued fractions \cite{kh} for further properties. We can now state Denjoy's result.

\begin{proposition}[Denjoy]\label{denjoy}
If $x=[a_1, a_2, a_3, \dots]_C$ is an irrational number in $[0,1]$ then
\begin{equation*}
?(x)= \frac{1}{2^{a_1-1}} - \frac{1}{2^{a_1+a_2-1}} + \frac{1}{2^{a_1+a_2+a_3-1}}- \dots = 2 \sum_{n=1}^{\infty} \frac{(-1)^{n+1}}{2^{a_1+a_2 + \dots + a_n}}.
\end{equation*}
\end{proposition}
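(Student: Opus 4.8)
The plan is to reduce the statement to a single functional equation relating $?(x)$ to $?(G(x))$, where $G$ is the Gauss map, and then to unfold it into the stated alternating series. Writing $a_1 = [1/x]$, so that $x = 1/(a_1 + G(x))$, the key claim is the recursion
\begin{equation}\label{eq:drec}
?(x) = \frac{1}{2^{a_1-1}} - \frac{1}{2^{a_1}}\,?(G(x)).
\end{equation}
Granting \eqref{eq:drec}, the proposition follows by iteration. Setting $x_0=x$ and $x_k = G(x_{k-1})$, so that $a_k = [1/x_{k-1}]$ is the $k$-th partial quotient, and applying \eqref{eq:drec} $N$ times, one gets
\begin{equation*}
?(x) = \sum_{n=1}^{N} \frac{(-1)^{n+1}}{2^{a_1+\dots+a_n-1}} + \frac{(-1)^{N}}{2^{a_1+\dots+a_N}}\,?(x_N).
\end{equation*}
Since $0 \le ?(x_N) \le 1$ and $a_1+\dots+a_N \to \infty$ (each $a_i \ge 1$), the remainder term tends to $0$, and the partial sums converge to the claimed series $2\sum_{n\ge 1}(-1)^{n+1}2^{-(a_1+\dots+a_n)}$.

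To establish \eqref{eq:drec} I would use two elementary functional equations for the Question Mark function, namely
\[
?(1-x) = 1 - ?(x) \qquad\text{and}\qquad ?\!\left(\frac{x}{1+x}\right) = \frac{1}{2}\,?(x), \qquad x \in [0,1].
\]
These interact transparently with continued fractions: if $z=[b_1,b_2,\dots]_C$ then $\frac{z}{1+z} = [b_1+1,b_2,\dots]_C$, so the second equation says that incrementing the leading partial quotient halves the value of $?$. Starting from $y=G(x)=[a_2,a_3,\dots]_C$, the number $\frac{1}{1+y}=[1,a_2,a_3,\dots]_C$ satisfies $\frac{1}{1+y}=1-\frac{y}{1+y}$, so the two equations give $?\!\left(\frac{1}{1+y}\right)=1-\frac12?(y)$. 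Incrementing the leading digit from $1$ up to $a_1$ amounts to applying $z\mapsto\frac{z}{1+z}$ exactly $a_1-1$ times, which multiplies $?$ by $2^{-(a_1-1)}$ and produces $x=[a_1,a_2,\dots]_C$; this yields \eqref{eq:drec} directly.

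It remains to prove the two functional equations from the inductive (mediant) definition of $?$, and this is where the only genuine care is needed. The relevant observation is that both maps $x\mapsto 1-x$ and $x\mapsto\frac{x}{1+x}$ preserve the Farey-neighbour relation: if $p/q,p'/q'$ satisfy $p'q-pq'=1$, then so do $(q-p)/q,(q'-p')/q'$ and so do $p/(p+q),p'/(p'+q')$, and each map carries the mediant $\frac{p+p'}{q+q'}$ to the mediant of the images. Consequently the functions $x\mapsto 1-?(1-x)$ and $x\mapsto 2\,?\!\left(\frac{x}{1+x}\right)$ both satisfy the same mediant recursion as $?$ with the same endpoint values $0$ and $1$ (using $?(1/2)=1/2$), so by induction on the level of the Stern--Brocot tree they coincide with $?$ on all rationals, and by continuity on all of $[0,1]$. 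I expect this bookkeeping with Farey neighbours, together with the passage from rationals to irrationals by continuity, to be the main (though routine) obstacle; once the two functional equations are secured, both the recursion \eqref{eq:drec} and its unfolding into the alternating series are immediate.
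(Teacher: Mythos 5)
Your proof is correct, and in fact the paper offers no proof to compare it with: Proposition \ref{denjoy} is quoted as a classical result of Denjoy (citing \cite{d1,d2}), and the two functional equations you invoke are precisely Lemma \ref{pro}, which the paper also states without proof. Judged on its own, your argument is complete. The identity that $z=[b_1,b_2,\dots]_C$ implies $\frac{z}{1+z}=[b_1+1,b_2,\dots]_C$ is correct (if $z=1/(b_1+w)$ then $z/(1+z)=1/(b_1+1+w)$), and combined with $\frac{1}{1+y}=1-\frac{y}{1+y}$ for $y=G(x)$ it yields the recursion $?(x)=2^{-(a_1-1)}\bigl(1-\tfrac{1}{2}?(G(x))\bigr)$, including the edge case $a_1=1$. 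The telescoping is sign-correct (the new term carries $(-1)^{N+2}=(-1)^N$), and the remainder $2^{-(a_1+\cdots+a_N)}\,?(x_N)$ vanishes because $a_i\ge 1$ and $0\le ?\le 1$; irrationality of $x$ guarantees the recursion never terminates. Your derivation of the two functional equations is also sound: the computations $(q-p)q'-(q'-p')q=p'q-pq'$ and $p'(p+q)-p(p'+q')=p'q-pq'$ confirm that both maps preserve Farey neighbours (for $x\mapsto 1-x$ with the order reversed, which is harmless since the mediant relation is symmetric), both carry mediants to mediants and preserve lowest terms, and the base values match (here $g(1)=2\,?(1/2)=1$ uses $?(1/2)=1/2$, which follows from the mediant definition applied to $0/1$ and $1/1$). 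Since every rational in $(0,1)$ occurs exactly once as the mediant of its Stern--Brocot parents, your induction determines the function uniquely on rationals, and continuity of $?$ handles the extension to irrationals. One remark on placement within the paper: an alternative route would be to deduce Denjoy's formula from Proposition \ref{?-b} together with Lemma \ref{CFaBCF}, but the paper proves Proposition \ref{?-b} by invoking Denjoy's formula, so your self-contained argument is the one that avoids circularity, and as a by-product it supplies the proof of Lemma \ref{pro} that the paper omits.
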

 
 The following functional properties of the question mark function hold.

\begin{lemma} \label{pro}
For every $x \in [0,1]$ we have that
\begin{eqnarray*}
?\left( \frac{x}{x+1} \right) = \frac{?(x)}{2} \quad \text{ and }  \quad 
?(1-x)= 1- ?(x).
\end{eqnarray*}
\end{lemma}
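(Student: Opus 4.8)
The plan is to prove the two functional equations of Lemma \ref{pro} separately, relying on the inductive definition of $?$ on the rationals together with continuity to pass to all of $[0,1]$.

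First I would verify the identity $?\!\left(\frac{x}{x+1}\right) = \frac{?(x)}{2}$ on rational points and then extend by continuity. The natural approach is to use the mediant recursion defining $?$. Observe that if $p/q$ is a rational in lowest terms, then the Stern--Brocot (or Farey) structure underlying the definition relates the point $\frac{x}{x+1}$ to a left-child operation in the Stern--Brocot tree. Concretely, the pair of parents giving $\frac{x}{x+1}$ via a mediant can be traced back so that the value of $?$ halves: if $x = \frac{p}{q}$ then $\frac{x}{x+1} = \frac{p}{p+q}$, and one checks that the mediant recursion forces $?\!\left(\frac{p}{p+q}\right) = \frac{?(p/q) + ?(0)}{2} = \frac{?(p/q)}{2}$, using $?(0)=0$ together with the Farey-neighbor relation $p\cdot 1 - 0 \cdot q = p$ handled appropriately at the base of the tree. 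I would make this precise by induction on the depth of $p/q$ in the Stern--Brocot tree, checking the base cases $x=0$ and $x=1$ directly ($?(0/1)=0$ gives $?(0)=0$, and $?(1/2) = \frac{?(0)+?(1)}{2} = \frac12 = \frac{?(1)}{2}$).

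Next, for the reflection identity $?(1-x) = 1 - ?(x)$, I would again argue on rationals and extend by continuity. The cleanest route is induction on the Stern--Brocot depth: the reflection $x \mapsto 1-x$ sends a Farey pair $p/q, p'/q'$ with $p'q - pq' = 1$ to the pair $\frac{q'-p'}{q'}, \frac{q-p}{q}$, which is again a valid Farey pair, and it exchanges the two halves of the tree symmetrically. Hence the mediant recursion for $?(1-\cdot)$ matches the recursion for $1-?(\cdot)$ step by step, with base cases $?(1-0) = ?(1) = 1 = 1 - ?(0)$ and $?(1-1)=?(0)=0=1-?(1)$. Alternatively, one could read both identities off Denjoy's formula in Proposition \ref{denjoy} by manipulating continued fraction expansions, but the Stern--Brocot induction is more self-contained and avoids case analysis on the parity of the continued fraction length.

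The main obstacle I anticipate is the bookkeeping in the reflection identity: one must verify that the map $x \mapsto 1-x$ genuinely induces an involution on Farey pairs that is compatible with the mediant operation at every node, so that the inductive hypothesis applies to both children simultaneously. Handling the boundary conventions at $0$ and $1$ carefully (where the neighbor condition $p'q - pq' = 1$ degenerates) is the delicate point. Once the identities are established on the dense set of rationals, the extension to all of $[0,1]$ is immediate from the continuity of $?$ (and of the maps $x \mapsto \frac{x}{x+1}$ and $x \mapsto 1-x$), which is built into the definition of $?$ for irrationals.
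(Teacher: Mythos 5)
The paper states Lemma \ref{pro} without proof, citing it implicitly as a classical property of the question mark function, so there is no in-paper argument to compare against; your proposal must stand on its own. Its overall architecture --- verify both identities on rationals by induction on Stern--Brocot depth, then extend by continuity of $?$ --- is sound, and your treatment of the reflection identity is correct and essentially complete: the reflection $x \mapsto 1-x$ sends a Farey pair $\left(\frac{p}{q}, \frac{p'}{q'}\right)$ with $p'q - pq' = 1$ to $\left(\frac{q'-p'}{q'}, \frac{q-p}{q}\right)$, which is again a Farey pair since $(q-p)q' - (q'-p')q = p'q - pq' = 1$, and it commutes with taking mediants, so the recursion for $?(1-\cdot)$ tracks that for $1 - ?(\cdot)$ node by node.

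The gap is in the halving identity. Your displayed one-step claim $?\left(\frac{p}{p+q}\right) = \frac{?(p/q) + ?(0)}{2}$ is not an instance of the defining recursion: the mediant of $\frac{0}{1}$ and $\frac{p}{q}$ is $\frac{p}{q+1}$, not $\frac{p}{p+q}$, and the neighbor determinant you invoke, $p \cdot 1 - 0 \cdot q = p$, equals $1$ only when $p=1$. For $p \geq 2$ the displayed equation is exactly the statement to be proved, not a consequence of a single application of the mediant rule, and no amount of care ``at the base of the tree'' rescues it as written. The repair is precisely the mechanism you deployed for the reflection, applied to $\Phi(x) = \frac{x}{x+1}$: one checks that $\Phi$ maps Farey pairs to Farey pairs, since $\Phi(p/q) = \frac{p}{p+q}$, $\Phi(p'/q') = \frac{p'}{p'+q'}$, and
\begin{equation*}
p'(p+q) - p(p'+q') = p'q - pq' = 1,
\end{equation*}
and that $\Phi$ intertwines mediants, because $\Phi\left(\frac{p+p'}{q+q'}\right) = \frac{p+p'}{(p+p')+(q+q')}$ is the mediant of $\Phi(p/q)$ and $\Phi(p'/q')$. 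With the base cases $?(\Phi(0)) = ?(0) = 0$ and $?(\Phi(1)) = ?(1/2) = \frac{1}{2}$, the depth induction then gives $?(\Phi(r)) = \frac{?(r)}{2}$ on all rationals in $[0,1]$, and continuity finishes, exactly as in your reflection argument. Alternatively, as you note, both identities fall out of Denjoy's formula (Proposition \ref{denjoy}): if $x = [a_1, a_2, \dots]_C$ then $\frac{x}{x+1} = [a_1 + 1, a_2, \dots]_C$, which halves every term of the alternating series; this is shorter, at the cost of the parity case analysis for $1-x$ that you wished to avoid.
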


A remarkable property of the Question Mark function,  and one of the reasons for Minkowski to define it, is that it gives a bijection between quadratic irrationals of the unit interval and rationals in the same interval. Moreover, it provides a bijection between rational and dyadic numbers. This later property is, of course, of interest to us since we have studied a bijection between the natural numbers and the rational ones that passes through the dyadic numbers. The relation with the Question Mark function is not, as we will see below, {{}{a}} coincidence. In order to explain this we require the following result that relates the backward with the continued fraction expansions. This corresponds to \cite[Exercise 10. p.128]{ka}.

 \begin{lemma}\label{CFaBCF}
If $x\in [0,1)$ is an irrational  number with infinite continued fraction expansion $x=[m_1, m_2, m_3, \dots]_C$. Then,  the backward continued fraction of $x$ is
\begin{equation}\label{deCFaBCF}
x=[\ \underbrace{2,\dots, 2}_{m_1-1}, \ (m_2+2),\  \underbrace{2,\dots, 2}_{m_3-1}, \ (m_4+2),\  \dots].
\end{equation}
\end{lemma}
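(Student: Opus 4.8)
The plan is to reduce the statement to a one-block Möbius identity and then promote it to the full infinite expansion by induction together with uniqueness.

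First I would record the elementary ``prepend one digit'' maps for the backward continued fraction. By \eqref{rep}, if $y=[b_2,b_3,\dots]$ then $[b_1,b_2,\dots]=\psi_{b_1}(y)$, where $\psi_b(y):=1-\frac{1}{b-1+y}$. A direct check shows that for $y\in[0,1)$ and an integer $b\ge 2$ one has $\psi_b(y)\in[\frac{b-2}{b-1},\frac{b-1}{b})\subset[0,1)$ and $\frac{1}{1-\psi_b(y)}=b-1+y\in[b-1,b)$; hence the first backward continued fraction digit of $\psi_b(y)$ equals $b$ and $R(\psi_b(y))=y$. Thus $\psi_b$ is a genuine right inverse of the Renyi map on the branch whose first digit is $b$, so prepending digits is legitimate. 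The case $b=2$ is especially simple, $\psi_2(y)=\frac{y}{1+y}$, and since these are Möbius maps they compose to $\psi_2^{\,k}(y)=\frac{y}{1+ky}$.

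The core computation is a one-step identity. Put $w:=G^2(x)=[m_3,m_4,\dots]_C$, which is irrational because $x$ is, so $0\le w<1$. The Gauss recursion gives $x=\frac{1}{m_1+1/(m_2+w)}=\frac{m_2+w}{1+m_1(m_2+w)}$. On the other side I would evaluate $\psi_2^{\,m_1-1}(\psi_{m_2+2}(w))$: first $\psi_{m_2+2}(w)=\frac{m_2+w}{m_2+1+w}$, and feeding this into $\psi_2^{\,m_1-1}$ the denominator collapses to $1+m_1(m_2+w)$, yielding exactly $\frac{m_2+w}{1+m_1(m_2+w)}=x$. Therefore $x=\psi_2^{\,m_1-1}(\psi_{m_2+2}(w))$, so by the previous paragraph the backward continued fraction of $x$ begins with the block $\underbrace{2,\dots,2}_{m_1-1},(m_2+2)$ and $R^{m_1}(x)=w=G^2(x)$.

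Finally I would iterate. Applying the same one-step identity with $G^{2}(x)$ in place of $x$, then $G^{4}(x)$, and so on, peels off successive blocks $\underbrace{2,\dots,2}_{m_{2j-1}-1},(m_{2j}+2)$ and shows that, for each $k$, the first $\sum_{j=1}^{k}m_{2j-1}$ backward continued fraction digits of $x$ agree with the corresponding prefix of the sequence $s$ appearing in \eqref{deCFaBCF}, the associated remainder being $G^{2k}(x)\in[0,1)$. The step requiring genuine care, and which I regard as the main obstacle, is exactly this passage from all finite prefixes to the infinite expansion: one must note that at every stage the remainder $G^{2k}(x)$ truly lies in $[0,1)$, so the digit algorithm $a_n=[1/(1-x_n)]+1,\ x_{n+1}=R(x_n)$ never terminates or produces an ambiguous tail of $2$'s, and then invoke uniqueness of the infinite backward continued fraction of an irrational. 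Concretely, $s\in\Sigma_{\ge 2}$ contains infinitely many digits $m_{2j}+2\ge 3$, so $s\notin\Sigma_{\bar 2}$, and injectivity of $\pi_R$ on $\Sigma_{\ge 2}\setminus\Sigma_{\bar 2}$ (Lemma \ref{code}) forces $s$ to be the backward continued fraction of $x$, which is the claim.
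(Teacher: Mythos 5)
Your proof is correct, and it follows the same overall decomposition as the paper---peel off one backward continued fraction block $\underbrace{2,\dots,2}_{m_1-1},(m_2+2)$ per pair of Gauss digits, with remainder $G^2(x)$, and iterate---but the mechanism for the one-block step is genuinely different. The paper proves the block identity by induction on $m_1$, repeatedly applying the identity $\frac{1}{1+1/t}=1-\frac{1}{t+1}$ to show that incrementing $m_1$ prepends a $2$; the nested fractions there are manipulated formally, and neither the legitimacy of reading off leading digits nor the passage from the one-block statement to the full infinite expansion is spelled out beyond ``it suffices to show.'' You instead verify once and for all that $\psi_b(y)=1-\frac{1}{b-1+y}$ is the inverse branch of the Renyi map over the digit $b$ (checking $\frac{1}{1-\psi_b(y)}=b-1+y\in[b-1,b)$, so the leading digit really is $b$ and $R\circ\psi_b=\mathrm{id}$), and then replace the induction by the single closed-form M\"obius computation $\psi_2^{\,k}(y)=\frac{y}{1+ky}$, which collapses $\psi_2^{\,m_1-1}\bigl(\psi_{m_2+2}(w)\bigr)$ to $\frac{m_2+w}{1+m_1(m_2+w)}=x$ in one line; I checked the algebra and it is right, including the edge case $m_1=1$. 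What your route buys is rigor at exactly the two tacit points of the paper's proof: the digit-reading is justified via the verified inverse branches rather than by formal manipulation of infinite nested fractions, and the endgame (prefixes of length $\sum_{j\leq k}m_{2j-1}$ agree, $s\notin\Sigma_{\bar 2}$ because infinitely many digits equal $m_{2j}+2\geq 3$, injectivity of $\pi_R$ from Lemma \ref{code}) turns the iteration into a complete argument; what the paper's induction buys, conversely, is a self-contained computation needing no facts about $R$ or $\pi_R$ at all. One cosmetic remark: your final appeal to injectivity of $\pi_R$ is not strictly necessary---since $m_{2j-1}\geq 1$ the prefix lengths tend to infinity, so once every digit position is covered by some agreeing prefix, the digit string of $x$ equals $s$ outright---but it is a harmless second safeguard.
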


\begin{proof} Denote by $G^2(x)_{BCF}$ the backward continued fraction of the second iterate of the  Gauss map, $G^2(x)$. It suffices to show that the backward continued fraction of $x$ verifies
\[
x=[\ \underbrace{2,\dots, 2}_{m_1-1},\ (m_2+2),\ G^2(x)_{BCF}].
\]
We will prove this inductively on $m_1$. The following direct, but useful, identity will be crucial in our reasoning,
\[
\frac{1}{1+\frac{1}{t}}=1-\frac{1}{t+1}.
\]
Assume $m_1=1$. Then
\begin{eqnarray*}
[1, m_2,m_3, \dots]_C&=&\frac{1}{1+\frac{1}{m_2+G^2(x)}}=1-\frac{1}{m_2+1+G^2(x)}\\
&=&1-\frac{1}{m_2+2 +G^2(x)-1}
=[(m_2+2)\ G^2(x)_{BCF}].
\end{eqnarray*}
Assume that (\ref{deCFaBCF}) holds for $m_1\geq 1$. Then, 

\begin{eqnarray*}
 [(m_1+1),\ m_2, m_3,\dots]_C  &=& \frac{1}{1+m_1+\frac{1}{m_2+G^2(x)}}=\frac{1}{1+\frac{1}{\left(m_1+ \frac{1}{m_2+G^2(x)}\right)^{-1}}}   \\
&=&  1- \frac{1}{1+\left(m_1+\frac{1}{m_2+G^2(x)}\right)^{-1}}  \\
&=& 1-\frac{1}{2+\frac{1}{\left(m_1+\frac{1}{m_2 +G^2(x)}\right)}-1}  =  [2, \ [m_1, m_2, \dots]].
\end{eqnarray*}
\end{proof}

As seen in Proposition \ref{denjoy}, Denjoy obtained an analytical formula for the question mark function associated to the continued fraction expansion. We now obtain the corresponding formula for the backward continued fraction. 

\begin{proposition} \label{?-b}
If $x\in [0,1)$ has backward continued fraction expansion $x=[a_1, a_2, \dots]$ then
\begin{eqnarray*}
?(x)&=&1-\sum_{j=1}^{\infty}\frac{1}{2^{-j+a_1+a_2\dots+a_j}}, \\
&=&0.b_{a_1-2}b_{a_2-2}\dots
\end{eqnarray*}
\end{proposition}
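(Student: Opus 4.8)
The plan is to bypass Denjoy's formula (Proposition \ref{denjoy}) and Lemma \ref{CFaBCF} entirely, and instead extract a single one-step recursion for $?$ along the Renyi shift directly from the functional equations in Lemma \ref{pro}. Write $x=[a_1,a_2,\dots]$, set $S_j=a_1+\dots+a_j$ with $S_0=0$, and recall from \eqref{rep} that $x=1-\tfrac{1}{(a_1-1)+R(x)}$, where $R(x)=[a_2,a_3,\dots]$. The central identity I aim to prove is
\begin{equation*}
?(x)=1-2^{\,2-a_1}+2^{\,1-a_1}\,?(R(x)).
\end{equation*}

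To establish it, I would first apply the reflection rule $?(1-y)=1-?(y)$ to \eqref{rep}, reducing the task to computing $?\!\left(\tfrac{1}{m+t}\right)$ with $m=a_1-1\ge 1$ and $t=R(x)\in[0,1)$. The elementary identity $\tfrac{1}{m+t}=\tfrac{w}{1+w}$ with $w=\tfrac{1}{(m-1)+t}$ lets me peel off one factor of $\tfrac12$ via $?\!\left(\tfrac{y}{y+1}\right)=\tfrac12?(y)$; iterating this $m-1$ times (the domain condition $w\in[0,1]$ holds because $a_1\ge 2$) gives $?\!\left(\tfrac{1}{m+t}\right)=2^{1-m}\,?\!\left(\tfrac{1}{1+t}\right)$, and one further application of both rules yields $?\!\left(\tfrac{1}{1+t}\right)=1-\tfrac12?(t)$. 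Substituting back produces the central identity.

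Next I would iterate the identity along the orbit $x,R(x),R^2(x),\dots$. After $n$ steps the coefficient multiplying $?(R^n(x))$ is exactly $2^{\,n-S_n}$, which tends to $0$ since $a_i\ge 2$ forces $S_n\ge 2n$; as $?(R^n(x))\in[0,1]$, the remainder vanishes, while the accumulated terms telescope to $1-\sum_{j\ge 1}2^{\,j-S_j}$ (the series converge absolutely, as $2^{\,j-S_j}\le 2^{-j}$). This gives the first displayed formula.

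Finally, for the binary form I would read off where the zeros sit in $0.b_{a_1-2}b_{a_2-2}\cdots$: the block $b_{a_j-2}$ has length $a_j-1$, so the single $0$ terminating the $j$-th block occupies binary position $\sum_{i=1}^{j}(a_i-1)=S_j-j$, and every other digit is a $1$. Hence the value of this string is $\sum_{p\ge 1}2^{-p}-\sum_{j\ge 1}2^{-(S_j-j)}=1-\sum_{j\ge 1}2^{\,j-S_j}$, matching the first formula. The only genuinely delicate point is the bookkeeping in deriving the central identity---in particular justifying the iterated use of $?\!\left(\tfrac{y}{y+1}\right)=\tfrac12?(y)$ with its domain constraints, and checking that the telescoped remainder really decays; everything downstream is routine.
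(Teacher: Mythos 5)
Your argument is correct, and it takes a genuinely different route from the paper's proof. The paper proves the proposition by translating the continued fraction expansion into the backward one (Lemma \ref{CFaBCF}), matching the resulting block string $0.b_{a_1-2}b_{a_2-2}\dots$ digit by digit against Denjoy's classical formula (Proposition \ref{denjoy}), and then passing from irrationals to all of $[0,1)$ by continuity. You instead derive, using only the two functional equations of Lemma \ref{pro} together with the representation \eqref{rep}, the one-step transfer identity
\begin{equation*}
?(x)=1-2^{2-a_1}+2^{1-a_1}\,?(R(x)),
\end{equation*}
and iterate it along the Renyi orbit. I checked the two points you flagged as delicate, and both go through: the peeling step with $w=\tfrac{1}{(m-1)+t}$ only invokes $?\bigl(\tfrac{y}{y+1}\bigr)=\tfrac12 ?(y)$ while the current index satisfies $m\geq 2$, where indeed $w\leq 1$; and writing $?(x)=A_n+2^{\,n-S_n}\,?(R^n(x))$, induction gives $A_n=1-\sum_{j=1}^{n}2^{\,j-S_j}-2^{\,n-S_n}$, so both the remainder and the correction term vanish as $n\to\infty$ because $2^{\,n-S_n}\leq 2^{-n}$, yielding the first displayed formula; the digit bookkeeping for the binary form (zeros exactly at positions $S_j-j$, ones elsewhere, with $S_{j+1}-(j+1)>S_j-j$ since $a_{j+1}\geq 2$) is also correct. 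Your route buys three things: it is self-contained, needing neither Denjoy's formula nor the continued-fraction-to-backward-continued-fraction dictionary; it is valid pointwise at rationals under the tail-of-$2$'s convention (the recursion and \eqref{rep} hold at every $x\in[0,1)$), so it avoids the paper's final continuity step, which tacitly requires continuity of the auxiliary function $\hat?$ and is left unverified there; and conceptually your recursion is exactly the accelerated form of the conjugacy $B\circ ?=?\circ J$ from Section \ref{epi}, i.e., the statement that $?$ intertwines the Renyi map with the linear Renyi map, which meshes nicely with the structure of the rest of the paper. What the paper's route buys in exchange is the explicit bridge to Denjoy's classical result and the parallel between the two continued fraction expansions, which is thematically central to the article.
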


\begin{proof} For  $x= [a_1, a_2, \dots ]$ let
\[
\hat ?[a_1, a_2, \dots]=0.b_{a_1-2}b_{a_2-2}\dots . 
\]
Let $x=[m_1, m_2, \dots]_{C}$ be an irrational real number $x\in [0,1)$. By Lemma \ref{CFaBCF}   the backward continued fraction of $x$ equals 
\[
x=[\underbrace{2,\dots, 2}_{m_1-1}, \ (m_2+2),\  \underbrace{2,\dots, 2}_{m_3-1},\  (m_4+2), \  \dots].
\]
Hence
\begin{eqnarray*}
\hat ?(x)&=&0.\underbrace{b_0\dots b_0}_{m_1-1} b_{m_2} \underbrace{b_0\dots b_0}_{m_3-1}b_{m_4}\underbrace{b_0\dots b_0}_{m_5-1}\dots\\
&=&0.\underbrace{0\dots 0}_{m_1-1} \underbrace{1\dots 1}_{m_2}0 \underbrace{0\dots 0}_{m_3-1}\underbrace{1\dots 1}_{m_4}0 \underbrace{0\dots 0}_{m_5-1}\dots\\
&=&\underbrace{0.0\dots 0}_{m_1} \underbrace{1\dots 1}_{m_2} \underbrace{0\dots 0}_{m_3}\underbrace{1\dots 1}_{m_4} \underbrace{0\dots 0}_{m_5}\dots .
\end{eqnarray*}
The above expression corresponds to the classic Question Mark function $?$ for a real number written in continued fraction expansion, and hence $\hat ?=?$ on irrationals. The conclusion follows from continuity.  
\end{proof}
The above discussion allows us to conclude that the conjugacy constructed in Proposition \ref{con} and Theorem \ref{main} is nothing but the Question Mark function. Therefore, we recover the following result by Bonanno and Isola \cite[Theorem 2.3]{bi},
\begin{proposition}
The maps $T$ and $D_2$ are topologically conjugated via the Minkowski Question Mark function.
\end{proposition}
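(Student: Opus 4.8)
The plan is to show that the homeomorphism $\phi = \pi_R \circ h \circ \eta \circ \pi_2$ constructed in Theorem \ref{main}, which conjugates $D_2$ with $T$, coincides with the Minkowski Question Mark function $?$. Since both $\phi$ and $?$ are continuous maps on $[0,1)$, it suffices to verify that they agree on a dense set, and the natural choice is the set of irrationals together with the explicit formula from Proposition \ref{?-b}.

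First I would trace through the definition of $\phi$ on an irrational point. Take $x \in [0,1) \setminus \mathbb{D}$ and write its binary expansion as a concatenation of blocks, $x = 0.b_{k_1}b_{k_2}\dots$, so that $(\eta \circ \pi_2)(x) = (k_1, k_2, \dots) \in \Sigma_{\geq 0}$. Applying $h$ shifts each digit by $2$, giving $(k_1+2, k_2+2, \dots) \in \Sigma_{\geq 2}$, and then $\pi_R$ reads this sequence as a backward continued fraction, so
\begin{equation*}
\phi(x) = [\,k_1+2,\ k_2+2,\ k_3+2,\ \dots\,].
\end{equation*}
On the other hand, Proposition \ref{?-b} asserts precisely that if $y$ has backward continued fraction expansion $y = [a_1, a_2, \dots]$ then $?^{-1}$ sends $y$ to the binary number $0.b_{a_1-2}b_{a_2-2}\dots$; equivalently, $?(0.b_{a_1-2}b_{a_2-2}\dots) = [a_1, a_2, \dots]$. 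Setting $a_j = k_j + 2$ makes the two computations match digit by digit, so $?(x) = \phi(x)$ for every irrational $x \in [0,1)$.

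The remaining step is a standard density-and-continuity argument: the irrationals are dense in $[0,1)$, and both $?$ and $\phi$ extend continuously to all of $[0,1]$ (the question mark function by its definition, and $\phi$ since it is the topological conjugacy of Theorem \ref{main}, which one extends to the dyadic/rational points via Remark \ref{orbit_rationals}). Two continuous functions agreeing on a dense set are equal, so $\phi = ?$ throughout. Combining this identification with the conjugacy $\phi \circ D_2 = T \circ \phi$ established in Theorem \ref{main} yields $?\circ D_2 = T \circ\ ?$, which is exactly the claimed statement, recovering \cite[Theorem 2.3]{bi}.

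The only genuine content is already packaged in Proposition \ref{?-b}, so the main obstacle is purely bookkeeping: one must make sure the index shift performed by $h$ lines up exactly with the $a_j \mapsto a_j - 2$ shift appearing in the block-code formula for $?$, and that the convention of always using the infinite (tail-of-$2$'s) backward continued fraction expansion is compatible with always using the tail-of-zeroes binary expansion under $\pi_2$. Once these conventions are checked on the irrationals, where everything is unambiguous, continuity disposes of the boundary cases without further difficulty.
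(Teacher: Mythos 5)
Your route is the same as the paper's: identify the conjugacy $\phi = \pi_R \circ h \circ \eta \circ \pi_2$ of Theorem \ref{main} with the Question Mark function via Proposition \ref{?-b}, then dispose of the countable exceptional set. But you have inverted Proposition \ref{?-b}, and this makes your key intermediate claim false. The proposition states that if $y=[a_1,a_2,\dots]$ in backward continued fraction, then $?(y)=0.b_{a_1-2}b_{a_2-2}\dots$ --- it is $?$ itself, not $?^{-1}$, that sends $y$ to that binary number. Combining the correct statement with your (correct) computation $\phi(x)=[k_1+2,k_2+2,\dots]$ for $x=0.b_{k_1}b_{k_2}\dots$ yields $?(\phi(x))=0.b_{k_1}b_{k_2}\dots=x$, i.e.\ $\phi = ?^{-1}$, not $\phi = ?$. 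A sanity check shows your identity cannot hold: $?$ maps the rationals onto the dyadic rationals, whereas by Remark \ref{orbit_rationals} the map $\phi$ sends $\mathbb{D}$ onto $\Q\cap[0,1)$ --- the opposite direction. Concretely, $x=2/3=0.b_1b_1b_1\dots$ gives $\phi(2/3)=[3,3,3,\dots]=(\sqrt{5}-1)/2$, while $?(2/3)=3/4$.

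The slip is repairable and does not threaten the truth of the proposition, because ``conjugated via $?$'' is insensitive to direction: from $\phi\circ D_2 = T\circ\phi$ and $\phi=?^{-1}$ one gets $?\circ T = D_2\circ ?$, i.e.\ $D_2 = {?}\circ T\circ {?}^{-1}$, rather than your $?\circ D_2 = T\circ ?$. Two smaller remarks: your digit-by-digit computation already gives equality at \emph{every} irrational, so density is not needed there --- continuity is only invoked to handle the rational/dyadic points, exactly as in the proof of Proposition \ref{?-b}; and the continuous extension of $\phi$ to all of $[0,1)$ is not automatic from Theorem \ref{main} (which only produces a homeomorphism between complements of countable dense sets) but follows precisely from the identification with $?^{-1}$. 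With the direction corrected, your argument coincides with the paper's.
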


\section{Backward Farey Map.} \label{epi}
As indicated  in the Introduction,  the backward continued fraction expansion is naturally related with the binary expansion. This relation emerges from the following scheme,
\begin{equation*}
\text{Renyi map} \xleftrightarrow{accelerated} \text{Backward Farey map} \xleftrightarrow{?} \text{Doubling map}
\end{equation*}
In this section we provide details to explain it.

\begin{definition}
The \emph{Backward Farey map} is the function $J:[0,1] \to [0,1]$ defined by
\begin{eqnarray*}
J(x)=
\begin{cases}
\frac{x}{1-x} & \text{ if } x \in [0,1/2) \\
\frac{2x-1}{x}  & \text{ if } x \in [1/2, 1].
\end{cases}
\end{eqnarray*}
\end{definition}

\begin{remark}
This map has two \emph{increasing} branches. The derivatives at $x=0$ and at $x=1$ are equal to $1$. If $G$ denotes the Gauss map, then for $x \in [1/2, 1]$ we have that $J(x)= 1- G(x)$. With a different purpose, this map was also studied by Bonanno and Isola \cite{bi}.
\end{remark}
%

\begin{proposition}
The Backward Farey map is topologically conjugated to the doubling map via {{}{the}} Question Mark function. That is, for every $x \in [0,1]$ we have
\begin{equation*}
(B \circ ?)(x)=(? \circ J)(x).
\end{equation*}
\end{proposition}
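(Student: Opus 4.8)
The plan is to verify the conjugacy equation $(B\circ ?)(x)=(?\circ J)(x)$ branch by branch, using nothing more than the two functional identities for the Question Mark function recorded in Lemma \ref{pro}, namely $?\left(\frac{x}{x+1}\right)=\frac{?(x)}{2}$ and $?(1-x)=1-?(x)$, both valid for all $x\in[0,1]$. Since $?$ is continuous, strictly increasing, and satisfies $?(1/2)=1/2$, it maps $[0,1/2)$ onto $[0,1/2)$ and $[1/2,1]$ onto $[1/2,1]$. This is the first thing I would record, because it tells us that the branch of the doubling map $B$ that is active on $?(x)$ is exactly the one matching the branch of $J$ active on $x$; the two piecewise definitions are therefore aligned.

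First I would handle the lower branch $x\in[0,1/2)$, where $J(x)=\frac{x}{1-x}$ and, since $?(x)\in[0,1/2)$, we have $B(?(x))=2?(x)$. The key is the inverted form of the first identity in Lemma \ref{pro}: setting $z=\frac{x}{1-x}$, equivalently $x=\frac{z}{z+1}$, gives $?(x)=?\left(\frac{z}{z+1}\right)=\frac{?(z)}{2}$, hence $?\left(\frac{x}{1-x}\right)=?(z)=2?(x)=B(?(x))$. This settles the lower branch and simultaneously produces the auxiliary identity $?\left(\frac{u}{1-u}\right)=2?(u)$ that I will reuse.

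Next I would treat the upper branch $x\in[1/2,1]$, where $J(x)=\frac{2x-1}{x}$ and $B(?(x))=2?(x)-1$. The idea is to fold this onto the lower branch via the reflection symmetry. I would rewrite $J(x)=1-\frac{1-x}{x}$ and observe that $\frac{1-x}{x}=\frac{u}{1-u}$ with $u=1-x\in(0,1/2]$. Applying $?(1-\,\cdot\,)=1-?(\,\cdot\,)$ and then the auxiliary identity to $u$ yields
\[
?(J(x))=1-?\left(\frac{1-x}{x}\right)=1-2?(1-x)=1-2\bigl(1-?(x)\bigr)=2?(x)-1=B(?(x)).
\]
A quick check at the breakpoint $x=1/2$, where both sides equal $0$, confirms consistency across the two pieces, so the identity holds for every $x\in[0,1]$.

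I do not anticipate a genuine obstacle: the whole argument is bookkeeping built on Lemma \ref{pro}. The only points demanding care are the branch-tracking — verifying that $?(x)$ lands in the same half-interval as $x$ so that the matching branch of $B$ is selected — and the algebraic rewriting $\frac{2x-1}{x}=1-\frac{1-x}{x}$, which exposes the symmetry that lets the upper branch be reduced to the lower one. The nontrivial content is really that these two elementary functional equations for $?$ are precisely the pullbacks of the two affine branches of $B$, which is what makes $?$ the conjugacy.
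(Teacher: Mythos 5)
Your proof is correct, and on the lower branch $x\in[0,1/2)$ it coincides with the paper's argument: both invert the identity $?\left(\frac{x}{x+1}\right)=\frac{?(x)}{2}$ from Lemma \ref{pro} to obtain $?\left(\frac{x}{1-x}\right)=2?(x)$. On the upper branch, however, you take a genuinely different and more elementary route. The paper expands $x=[1,a_2,a_3,\dots]_C$ in continued fractions and invokes Denjoy's formula (Proposition \ref{denjoy}) to derive $2?(x)-1=1-?(G(x))$, then concludes via $J(x)=1-G(x)$ and the reflection identity; strictly speaking that computation applies to irrational $x$, with rational points needing a continuity remark the paper leaves implicit. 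You instead stay entirely inside Lemma \ref{pro}: rewriting $\frac{2x-1}{x}=1-\frac{1-x}{x}$ and observing $\frac{1-x}{x}=\frac{u}{1-u}$ with $u=1-x\in[0,1/2]$, you fold the upper branch onto the auxiliary identity $?\left(\frac{u}{1-u}\right)=2?(u)$ already established on the lower branch, getting $?(J(x))=1-2?(1-x)=2?(x)-1$. Since $G(x)=\frac{1-x}{x}$ on $[1/2,1]$, your computation in fact reproves the paper's intermediate identity $?(G(x))=2-2?(x)$ without any series manipulation, and it holds verbatim for all $x\in[1/2,1]$, rational or irrational. You also make explicit a point the paper glosses over: that $?(1/2)=1/2$ (immediate from $?(1-x)=1-?(x)$) guarantees $?(x)$ lands in the same half-interval as $x$, so the correct affine branch of $B$ is selected on each piece. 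What the paper's route buys is the explicit digit mechanics tying $?$ to the Gauss map and to Denjoy's expansion, consonant with the article's theme; what yours buys is self-containedness and uniform validity from the two functional equations alone.
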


\begin{proof}
Let us consider first the case in which $x \in [0, 1/2)$. Note that the function $\frac{x}{1-x}$ is the inverse of $\frac{x}{x+1}$. Thus, by Lemma \ref{pro} we obtain,
\begin{eqnarray*}
?(x)= ?\left(\frac{\frac{x}{1-x}}{\frac{x}{1-x} +1}	\right)= \frac{1}{2} ? \left(\frac{x}{1-x}	\right).
\end{eqnarray*}
Therefore,
\begin{equation*}
(B \circ ?)(x)= 2?(x)= ? \left(\frac{x}{1-x}	\right)= (? \circ J)(x).
\end{equation*}
 Assume now that $x \in [1/2, 1 ]$. In this case the continued fraction expansion of the point has the form $x=[1, a_2, a_3, \dots]_C$. By Dejoy's formula for $?$ we have,
 \begin{eqnarray} \label{cuenta}
 ?(x)= 2 \sum_{n=1}^{\infty} \frac{(-1)^{n+1}}{2^{1+a_2+a_3+ \dots a_n}}= 2\left(\frac{1}{2} - \frac{1}{2^{1+a_1}} +\frac{1}{2^{1+a_1+a_2}} - \dots		\right) =\\
  2\left(\frac{1}{2} \left( 1 - \frac{1}{2^{a_1}} +\frac{1}{2^{a_1+a_2}} - \dots	\right)	\right) =
  1 + \left(	 - \frac{1}{2^{a_1}} +\frac{1}{2^{a_1+a_2}} - \dots		\right).
 \end{eqnarray}
From \eqref{cuenta} we  have that:
\begin{equation*}
2?(x)= 2 - ?(G(x)).
\end{equation*}
In particular,
\begin{equation} \label{un-lado}
2?(x)-1= 1 -?(G(x)).
\end{equation}
Note now that, for $x \in [1/2, 1]$ we have that $J(x)= 1- G(x)$. Therefore, by Lemma \ref{pro},
\begin{equation} \label{otro-lado}
(? \circ J)(x)=?(1-G(x))= 1- ?(G(x)).
\end{equation}
Combining equations \eqref{un-lado} and \eqref{otro-lado} we have that if $x \in [1/2, 1]$ then
\begin{equation*}
(B \circ ?)(x)=(? \circ J)(x).
\end{equation*}
\end{proof}


It is well known, see for example \cite[Section 2.1]{i2}, that the Gauss map is an accelerated version of the Farey map. We now  prove an analogous result for the Renyi map.  We first consider the following partition of the unit interval. For every $n \in \N$, let
\begin{equation*}
P_n= \left[	\frac{n-1}{n}, \frac{n}{n+1}	\right).
\end{equation*}

\begin{remark}
For every $n \geq 1$ we have that $J(P_{n+1})= P_n$. Indeed,
\begin{equation*}
J\left( \frac{n}{n+1}\right) = 1 - G\left( \frac{n}{n+1}\right) =
1 - \left( \frac{n+1}{n} -\left[ \frac{n+1}{n}\right] \right)  = \frac{n-1}{n}.
\end{equation*} 
\end{remark}
 
Consider now the first hitting time to $P_1$. This function is well defined away from the pre-images of $1$ and is given by,
\begin{equation*}
\tau(x):= 1 + \min \left\{n \in \N \cup\{0\} : 	J^n(x) \in P_1	\right\}.
\end{equation*}
Note that if $x \in P_n$ then $\tau(x)= n$, hence
\begin{equation*}
\tau(x)= \left[\frac{1}{1-x}	\right].
\end{equation*}
We have:

\begin{lemma}
For every $x \in [0,1]$, not in the boundary  of the partition defined by $(P_n)_n$, we have
\begin{equation*}
R(x)= J^{\tau(x)}(x).
\end{equation*}
\end{lemma}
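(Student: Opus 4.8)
The plan is to exploit the fact that $R$ is the jump (acceleration) transformation of $J$ relative to the first hitting of $P_1$, and then to reduce everything to two elementary pointwise facts. By the Remark just proved, $J(P_{n+1})=P_n$ for every $n\ge 1$, so a point $x\in P_n$ satisfies $J^{n-1}(x)\in P_1$ and $\tau(x)=n$. Hence it suffices to establish: (i) that $R$ and $J$ agree on $P_1=[0,1/2)$; and (ii) that $R$ is unchanged by one step of $J$ as long as one stays outside $P_1$, i.e.\ $R\circ J=R$ on $[1/2,1)$. Granting these, for $x\in P_n$ I would slide down $R(x)=R(J(x))=\dots=R(J^{n-1}(x))$ using (ii), and since $J^{n-1}(x)\in P_1$ the agreement (i) rewrites the last term as $J\bigl(J^{n-1}(x)\bigr)=J^{\tau(x)}(x)$.

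For the base case (i) I would simply note that on $P_1$ one has $\frac{1}{1-x}\in[1,2)$, so $\left[\frac{1}{1-x}\right]=1$ and therefore $R(x)=\frac{1}{1-x}-1=\frac{x}{1-x}=J(x)$, where the last equality is the first branch of $J$.

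The heart of the matter is the invariance (ii). On $[1/2,1)$ the second branch gives $J(x)=\frac{2x-1}{x}=2-\frac1x$, so $1-J(x)=\frac1x-1=\frac{1-x}{x}$, whence $\frac{1}{1-J(x)}=\frac{x}{1-x}=\frac{1}{1-x}-1$. Thus $\frac{1}{1-J(x)}$ and $\frac{1}{1-x}$ differ by the integer $1$ and so have the same fractional part; by the very definition of $R$ this yields $R(J(x))=R(x)$. This single observation, that crossing the second branch of $J$ subtracts exactly $1$ from $\frac{1}{1-x}$, is the whole analytic content of the lemma, and once it is recorded nothing further is required.

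Finally I would assemble the pieces by induction on $n$ (equivalently, by proving $R(x)=R(J^{j}(x))$ for $0\le j\le n-1$, the step being legitimate because $J^{j}(x)\in P_{n-j}\subset[1/2,1)$ whenever $j\le n-2$). The only bookkeeping point is to check that no intermediate iterate lands on a partition endpoint: since $x$ avoids the boundary set $\{\,n/(n+1)\,\}$ and $J$ carries the interior of $P_{n+1}$ onto the interior of $P_n$, every $J^{j}(x)$ stays interior, so each use of the invariance identity is justified. I therefore expect the handling of these boundary (and pre-image-of-$1$) points to be the only mild obstacle; the substantive computation is entirely the difference-by-one identity above.
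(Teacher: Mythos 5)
Your proof is correct, but it takes a genuinely different route from the paper's. The paper argues by explicit computation: it records the branch formula $R|_{P_n}(x)=\frac{nx-(n-1)}{1-x}$, states (without proof) the closed form $\tilde{G}^n(x)=\frac{(n+1)x-n}{nx-(n-1)}$ for the iterates of the second branch $\tilde{G}(x)=\frac{2x-1}{x}$, and then checks directly that applying $J$ (which coincides with $R$ on $[0,1/2]$) to $\tilde{G}^{n-1}(x)=\frac{nx-(n-1)}{(n-1)x-(n-2)}$ reproduces $\frac{nx-(n-1)}{1-x}$. You instead isolate the invariance $R\circ J=R$ on $[1/2,1)$, resting on the one-line identity $\frac{1}{1-J(x)}=\frac{x}{1-x}=\frac{1}{1-x}-1$: the two quantities differ by an integer, so their fractional parts, i.e.\ the values of $R$, coincide. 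Combined with the base case $R=J$ on $P_1$ and the bookkeeping $J^{j}(x)\in P_{n-j}\subset[1/2,1)$ for $j\le n-2$, this yields the lemma by a clean induction. Your approach buys two things: it dispenses with the closed form for $\tilde{G}^{n}$, which the paper asserts without proof and which would itself require an induction; and it explains conceptually why $R$ is the jump transformation of $J$, namely that the value of $R$ is constant along the $J$-orbit until the first entry into $P_1$. What the paper's computation buys in exchange is explicit formulas for all intermediate iterates $J^{j}(x)$, which your argument never produces. Your closing remark that $J$ carries interiors of the $P_n$ to interiors, so that no iterate lands on a partition endpoint, makes explicit a point the paper leaves implicit in its hypothesis excluding the boundary.
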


\begin{proof}
If $x \in P_n$ then $\tau(x)=n$. Hence $J^{\tau(x)}(x)= J^n(x)=J \circ J^{n-1}(x)$.
Recall that the Renyi map has the following form:
\begin{equation*}
R|_{P_n}(x)= \frac{nx- (n-1)}{1-x}.
\end{equation*}
On the other hand, if we denote the second branch of the backward Farey map by $\tilde{G}(x)=\frac{2x-1}{x}$, then
\begin{equation*} 
\tilde{G}^n(x)= \frac{(n+1)x -n}{nx-(n-1)}.
\end{equation*}
Thus, since the Renyi map and the map $J$ coincide in $[0, 1/2]$ we have
\begin{eqnarray*}
J^{\tau(x)}(x)= J^n(x)=J \circ J^{n-1}(x)= R\left( \frac{nx -(n-1)}{(n-1)x-(n-2)} \right) &=& \\ \frac{1}{1- \frac{nx -(n-1)}{(n-1)x-(n-2)}	} -1= \frac{nx -(n-1)}{1-x}.
\end{eqnarray*}
\end{proof}


\end{document}